\pgfplotsset{compat=1.18}
\newcounter{common}
\newtheorem{thm}[common]{Theorem}
\newtheorem{lem}[common]{Lemma}
\newtheorem{prop}[common]{Proposition}
\newtheorem{defi}[common]{Definition}
\newtheorem{assumption}[common]{Assumption}
\newenvironment{definition}
{%
	\pushQED{\qed}\begin{defi}}
	{\popQED\end{defi}}
\theoremstyle{remark}
\newtheorem{rem}[common]{Remark}
\def\eps{{\epsilon}}
\newcommand{\kl}{\left(}
\newcommand{\kr}{\right)}
\newcommand{\el}{\left[}
\newcommand{\er}{\right]}
\newcommand{\N}{\mathbb{N}}
\newcommand{\intd}[1]{\, \mathrm{d} #1}
\newcommand{\norm}[1]{\left\lVert#1\right\rVert}
\newcommand{\E}{\mathbb{E}}
\newcommand{\R}{\mathbb{R}}
\newcommand{\abs}[1]{\left|#1\right|}
\newcommand{\betrag}[1]{\left|#1\right|}
\DeclareMathOperator{\rademp}{\widehat{\textup{Rad}}}
\DeclareMathOperator*{\argmin}{arg\,min}
\definecolor{CarmineRed}{rgb}{0.585938, 0, 0.09375}
\definecolor{UltraMarine}{rgb}{0.0703125, 0.0390625, 0.558594}
\crefname{rem}{Remark}{Remarks}
\crefname{lem}{Lemma}{Lemmas}
\crefname{thm}{Theorem}{Theorems}
\crefname{prop}{Proposition}{Propositions}
\crefname{assumption}{Assumption}{Assumptions}
\crefname{defi}{Definition}{Definitions}
\title[Non-Asymptotic Analysis of Projected Gradient Descent for PINNs]{Non-Asymptotic Analysis of Projected Gradient Descent for Physics-Informed Neural Networks}
\author{Jonas Nie\MakeLowercase{{\ss}}en\textsuperscript{1,\orcidlink{0009-0004-3848-9873}}}
\email{niessen@eddy.rwth-aachen.de}
\author{Johannes M\"{u}ller\textsuperscript{2,\orcidlink{0000-0001-8729-0466}}}
\email{johannes.christoph.mueller@tu-berlin.de}
\address{$^1$ Department of Mathematics, RWTH Aachen University, Aachen, 52062, Germany}
\address{$^2$ Institut f\"{u}r Mathematik, Technische Universit\"{a}t Berlin,  Berlin,  10623, Germany}
\begin{document}
\begin{abstract}~
In this work, we provide a non-asymptotic convergence analysis of projected gradient descent for physics-informed neural networks for the Poisson equation. 
Under suitable assumptions, we show that the optimization error can be bounded by $\mathcal{O}(1/\sqrt{T} + 1/\sqrt{m} + \eps_{\textup{approx}})$, where $T$ is the number of algorithm time steps, $m$ is the width of the neural network and $\eps_{\textup{approx}}$ is an approximation error. 
The proof of our optimization result relies on bounding the linearization error and using this result together with a Lyapunov drift analysis. 
Additionally, we quantify the generalization error by bounding the Rademacher complexities of the neural network and its Laplacian. Combining both the optimization and generalization results, we obtain an overall error estimate based on an existing error estimate from regularity theory.\\
\textbf{Keywords:}{
Physics-informed neural networks, neural tangent kernel, reproducing kernel Hilbert space
}

\end{abstract}
\maketitle
\section{Introduction}

Partial differential equations (PDEs) are foundational in describing various physical phenomena, governing processes across fields such as fluid dynamics, electromagnetism, and quantum mechanics. However, solving PDEs in high-dimensional spaces is a significant computational challenge, where traditional numerical methods, including finite element and finite difference techniques, often become infeasible due to the \emph{curse of dimensionality}.

In recent years, neural network-based methods have emerged as promising alternatives for solving high-dimensional PDEs. Among these, techniques such as the deep Ritz method, deep Galerkin approaches, physics-informed neural networks (PINNs), backward stochastic differential equation BSDE-based methods, and neural operators have shown notable success, and we refer to the overview articles~\cite{weinan2021algorithms,blechschmidt2021three, gonon2024overview} for a more detailed discussion of the different approaches. PINNs, in particular, have gained popularity due to their flexibility in incorporating physical measurements and straightforward implementation~\cite{raissi2019physics}. These properties make them well-suited for various applications, from modeling complex fluid flows to addressing inverse problems in engineering and beyond~\cite{cai2021physics, karniadakis2021physics, cuomo2022scientific}. 

Despite their immense popularity, the theoretical analysis remains difficult. Although some approximation results and error analyses exist, PINN optimization has hardly been studied. However, the optimization seems to be an essential part towards an understanding of the behaviour of PINNs. Hence, we mainly focus on this part of the problem. 
\subsubsection*{Contributions}
In this work, we analyze the optimization and generalization properties of projected gradient descent for PINNs with one hidden layer. 
Our results are non-asymptotic and do not require any assumptions on the network size thus dropping the usual strong over-parameterization assumption. 
More precisely, we show the following: 
\begin{itemize}
    \item We provide a general optimization guarantee for PINNs trained via projected gradient descent for the case of the Poisson equation, see \cref{thm:optimization bound}, showing $\mathcal{O}(1/\sqrt{T} + 1/\sqrt{m})$ convergence rate up to a remainder term stemming from a function approximation quantity. 
    Here, $T$ is the number of algorithm steps, and $m$ denotes the width of the neural network. 
    In contrast to previous works, we work in the under-parameterized regime, which for PINNs -- in contrast to a supervised learning setting -- is more suitable than the over-parameterized regime.
    \item We estimate the Rademacher complexities of the neural network and its Laplacian which allows us to bound the generalization error of the best-iterate weights produced by the gradient descent algorithm, see \cref{thm:generalization bounds}. 
    \item In \cref{thm:overall-error}, we combine the optimization result with the generalization result to get an overall error bound by using a classic result from regularity theory.
\end{itemize}

\subsubsection*{Related works}
Optimization guarantees under strong over-parametrization assumptions have been provided for various training algorithms for physics-informed neural networks. 
For shallow networks of width $\Omega(n^2)$, the empirical loss for a fixed data set has been shown to converge exponentially fast under gradient flow and gradient descent~\cite{luo2020two, gao2023gradient}. 
In this setting, the smallest eigenvalue $\lambda_0$ of the neural tangent kernel (NTK) yields the exponential convergence rate $\mathcal O((1-\lambda_0\eta)^k)$ like in the case of supervised learning. 
Further, a variant of gradient descent obtained from an implicit Euler-discretization of gradient flow has been shown to converge at the same rate~\cite{xu2024convergence}. 
In contrast, preconditioning with a Gauß-Newton or Hessian matrix with damping yields an exponential convergence rate $\mathcal O((1-\eta)^k)$ independent of the spectrum of the NTK~\cite{rathore2024challenges, xu2024convergenceNG}. 
In this context, the over-parametrization assumption is formulated in terms of the number of discretization points of the integrals appearing in the loss function of the physics-informed neural network. More explicitly, in the over-parametrized regime, the discretization points are usually fixed at the beginning of training, and the width of the neural network is chosen much larger than the number of discretization points. However, in practice, it is common to generate new integration points during optimization.
For an overview of different sampling strategies, we refer to ~\cite{toscano2024pinnspikansrecentadvances}.
In particular, the number of integration points consistently grows throughout the optimization process. 
Since in practice it is common to choose a fixed neural network size, the number of integration points naturally exceeds the network width at some point of the optimization procedure. To reflect this, we do not make any assumptions on the network size and allow new samples to be drawn at every iteration.
Besides the optimization guarantees, the convergence of the infinite width limit of the neural tangent kernel for PINNs has been investigated in~\cite{bonfanti2024challenges, zeng2024featuremappingphysicsinformedneural}. 

Apart from the theoretical treatment of the optimization of PINNs, error estimates based on the regularity theory of PDEs have been provided in a variety of settings~\cite{Mueller2022, de2022error, mishra2023estimates, de2024error}, and we refer to~\cite{Zeinhofer2024} for a unified framework. 
These estimates break down the overall error into components of optimization, approximation, and generalization. The generalization error in the context of PINNs has been analyzed, where~\cite{luo2020two, xu2024convergence} used the path norm of the networks in order to control the Rademacher complexity of the network and its Laplacian. 
For us, it is natural to work with the norm used in the projection step of our algorithm rather than the path norm. 
Further, \cite{lau2024pinnacle} bound the Rademacher complexity in dependence on the smallest eigenvalue of the NTK for over-parameterized networks. 

Next to PINNs, the optimization error of the so-called deep Ritz method, which utilizes the variational formulation of PDEs, has been analyzed in~\cite{jiao2024drm} using an approach similar to ours. However, the deep Ritz loss is fundamentally different from the PINN loss as it does not involve second-order derivatives. 

In the context of supervised learning, the over-parameterization assumption that yields exponential convergence~\cite{arora2019fine,du2019gradient} was removed in~\cite{Telgarsky2020} using a Lyapunov drift analysis to obtain a convergence guarantee up to an approximation error for functions in the neural tangent kernel RKHS. 
Subsequently, this technique was used in the context of reinforcement learning~\cite{Cayci2023, alfano2024novel}, where we extend this approach to PINNs. 

\subsubsection*{Notation}
For $k\in\mathbb N$, we denote the set of all $k$ times continuously differentiable functions on an open set $\Omega\subseteq\mathbb R^d$ by $C^k(\Omega)$. 
We denote the functions in $C^k(\Omega)$ that are Hölder continuous with exponent $\beta\in[0,1)$ by $C^{k,\beta}(\Omega)$. 
A domain $\Omega \subseteq \R^d$ is called $C^{k,\beta}$ domain if its boundary is locally the graph of a $C^{k,\beta}$ function. 
We denote the Sobolev space of $k$ times weakly differentiable functions in $L^p(\Omega)$ by $W^{k,p}(\Omega)$. We denote $\sup_{x\in \Omega} |\cdot(x)|$ by $\norm{\cdot}_{\infty(\Omega)}$. By $\N_e$ we denote the even natural numbers, i.e. $\N_e \coloneqq \{n\in \N \mid n \mod 2 = 0\}$. If $f, g$ are real functions on $\Omega \subset \R^d$, we write $f\precsim g$ if and only if there exists a constant $c>0$ such that $f(x)\leq cg(x)$ for all $x\in \Omega$. For $M\in \R^{m\times d}$ we set $B_{2,\infty}(M, r) \coloneqq \{ \tilde{M}\in \R^{m\times d} \mid \tilde{M_i} \in B_2(M_i, r) \text{ for } i\in \{1,\ldots, m\}\}$, where $B_2(M_i, r) \coloneqq \{\tilde{M_i} \in \R^d \mid \lVert{\tilde{M_i} - M_i}\rVert_2 \leq r\}$.

\section{Preliminaries on Physics-Informed Neural Networks}
This section is devoted to stating the problem, introducing physics-informed neural networks (PINNs), and introducing the algorithm we analyze in this work. 

Let us consider the Poisson equation 
\begin{equation}\label{eq:poisson with zero boundary}
			\begin{aligned}
				-\Delta u&=f && \text{in }\Omega \subset \R^d \\
				u&=0 && \text{on }\partial\Omega
			\end{aligned},
\end{equation}
which is the prototypical elliptic PDE. 
It is well known that for regular domains and data, meaning $C^{2, \beta}$-domain $\Omega\in \R^d$ and  $f\in C^\beta(\Omega)$ for some $\beta>0$, there exists a unique solution $u^\ast\in C^{2,\beta}(\Omega)$ of this problem~\cite{Evans2010}, which can be characterized as 
\begin{align}\label{eq:pinn energy}
    u^* &= \argmin_{u\in C^2(\Omega)} \int_{\Omega} \kl \Delta u(x) + f(x)\kr^2 \intd{x} + \lambda \int_{\partial \Omega} u(s)^2 \intd{s}
\end{align}
for any $\lambda>0$. 
Note that $u^\ast$ is the unique element achieving a value of zero of the objective on the right-hand side of \Cref{eq:pinn energy}. 

\subsubsection*{Physics-informed neural networks (PINNs)}
To numerically solve \eqref{eq:poisson with zero boundary}, classical approximation schemes like Finite Element Methods (FEMs) work extremely well for low dimensional settings. 
However, they suffer from the curse of dimensionality, and hence these methods become computationally infeasible in high-dimensional settings or in complex geometries. 
However, one can use the formulation~\eqref{eq:pinn energy} of the Poisson equation as a least squares problem in combination with any parametric function class $\mathcal F = \{ F(\cdot;\theta) \colon\overline{\Omega} \to \mathbb R \mid \theta \in\Theta \}$ with parameter space $\Theta$, like finite element functions or neural networks~\cite{bochev1998finite, raissi2019physics,sirignano2018dgm}. 
This leads to the loss function 
\begin{equation}\label{eq:loss_function_exact}
		E(\theta) = \int\limits_\Omega \kl \Delta F(x;\theta) + f(x)\kr^2 \intd{x} + \lambda \int\limits_{\partial \Omega} F(y;\theta)^2 \intd{y},
\end{equation}
with a boundary penalty parameter $\lambda > 0$. 
Intuitively, minimization of $E$ should lead to a reasonable approximation $F(\cdot; \theta)$ of $u^*$. 
Indeed, it is a well-known fact of regularity theory that 
\begin{align}
    \lVert u^* - F(\cdot; \theta) \rVert_{H^{1/2}(\Omega)} \le c_{\textup{reg}} \cdot \sqrt{E(\theta)}, 
\end{align}
where $c_{\textup{reg}}>0$ denotes a suitable regularity constant, see \cite{Evans2010, grisvard2011elliptic, Mueller2022, Zeinhofer2024}. 

When neural networks are trained with a least-square loss of the form \eqref{eq:loss_function_exact} this is commonly referred to as \emph{Physics-Informed Neural Networks (PINNs)} or \emph{deep Galerkin method}~\cite{raissi2019physics,sirignano2018dgm} and has gained immense popularity in recent years. 
One benefit of this ansatz is that it can easily be implemented even for complex geometries and high-dimensional problems as long as one can discretize the integrals in the definition of the objective~\eqref{eq:loss_function_exact}. Since it is typically not feasible to evaluate the occurring integrals analytically, we approximate them using a quadrature rule, which can be deterministic or stochastic. 
Thus, we define the empirical loss
\begin{equation}\label{eq:opt}
  {E}_S(\theta) \coloneqq \frac{1}{b_\Omega} \sum_{j=1}^{b_\Omega} \kl \Delta F(x_j; \theta) + f(x_j)\kr^2 + \frac{\lambda}{b_{\partial\Omega}} \sum_{j=1}^{b_{\partial \Omega}}  F(y_j; \theta)^2, 
\end{equation}
where $S = ((x_j)_{j=1}^{b_\Omega},(y_j)_{j=1}^{ b_{\partial\Omega}})\subseteq \Omega\times\partial\Omega$ denotes the set of quadrature points. 

\subsubsection*{Neural network model and training algorithm}
In general, any neural network architecture  can be used for PINNs, however, we focus our analysis on fully-connected feedforward neural networks with one hidden layer, which are given by 
\begin{align}
    F(x; \theta) \coloneqq F(x;\theta,c) \coloneqq \sum_{i=1}^m c_i \sigma(\theta_i^T x),
\end{align}
where $\sigma\colon\mathbb R\to\mathbb R$ is a smooth activation function, $m\in \N_e$ is an even number, and \emph{hidden weights} $\theta_i\in\R^d$ and \emph{output weights} $c_i\in\R$ for $i = 1, \dots, m$. We fix the output weights $c_i$ and consider the projected gradient descent described in \Cref{alg:pinn optimization}.
\begin{algorithm}
		\caption{$\norm{\cdot}_{2,\infty}$-Projected (Stochastic) Gradient Descent}
		\begin{algorithmic}
				\Require projection radius $p>0$, learning rate $\eta >0$,
                and
                initialization $(\theta(0),c)$ 
				\For{$t = 0,\ldots, T-1$}
                    \State update integration points $S_t$ 
                    \Comment{might depend on $S_{t-1}$} 
					\For{$i=1,\ldots, m$}
						\State $\widetilde{\theta_i}(t+1) = \theta_i(t) - \eta \nabla_{\theta_i} E_{S_t}(t,\theta(t))$
						\State $\theta_i(t+1) = \pi_{{B_2}\kl \theta_i(0), \frac{p}{\sqrt{m}}\kr} \kl \widetilde{\theta_i}(t+1)\kr$
					\EndFor{}
				\EndFor{}
		\end{algorithmic}
		\label{alg:pinn optimization}
\end{algorithm}
\begin{rem}
    The projection onto a closed convex subset of $\R^d$ is one of the factors that makes the proof of our optimization result \cref{thm:optimization bound} work. 
    However, our optimization result can still be used for non-projected gradient descent by making the following observation: Let $\tilde{\theta}$ be the random variable that is defined as the maximum overall weights from $0$ to $T-1$ with respect to the $\norm{\cdot}_{2,\infty}$ norm when running gradient descent without the projection step under random initialization. Then we choose $p$ large enough to ensure that $\lVert{\tilde{\theta}}\rVert_{2,\infty} < p$ with high probability. Nevertheless, choosing a suitable $p$ is a nontrivial task, which we will not focus on in this work.
\end{rem}
Here, $\pi_A(x)$ denotes the orthogonal projection of $x\in \R^d$ onto the subset $A\subset \R^d$ with respect to the Euclidean inner product. We initialize the neural network at time $t=0$ by setting
\begin{equation}\label{eq: ntk initialization}
    \begin{aligned}
        \theta_i(0) &= \theta_{i+\frac{m}{2}}(0) &&\overset{\mathclap{\text{iid}}}{\sim} \mathcal{N}_a(0,I_d)\\
        c_i &= - c_{i+\frac{m}{2}} &&\overset{\mathclap{\text{iid}}}{\sim} \frac{1}{\sqrt{m}}\text{Rademacher}
    \end{aligned}
\end{equation}
for $i=1, \ldots, \frac m2$, where $\mathcal{N}_a(0, I_d)$ denotes the $d$-dimensional symmetrically truncated normal distribution with support in $[-a,a]^d$, see \cref{definition: truncated normal distribution}.
In principle, our results work with any distribution that has compact support. In this work, we fix the distribution $\mathcal{N}_a(0,I_d)$ because it works well in practice.

We consider the case where the output weights $c_i$ are kept as initialized, and only the hidden weights $\theta_i$ are trained, which is common in the neural tangent kernel literature, see \cite{Chizat2018,Du2019}. 
Thus, we usually write $F(x; \theta)$ instead of $F(x; \theta,c)$.

The specific initialization described above as well as the projection after each step within the algorithm, may seem artificial. Both restrictions are made to simplify the analysis of the optimization procedure. It leads to an analysis within a special regime, which we further elaborate in the beginning of the next section.
An important consequence of the symmetric initialization is that then $F(x; \theta(0)) = 0$ for all $x\in \R^d$, which simplifies the convergence analysis.

Regarding the optimization result, the integration points can be chosen arbitrarily, since our optimization theorem does not depend on that choice. However, to achieve a reasonably small generalization error, it is common to use integration points that arise from a uniform distribution on $\Omega$ or $\partial \Omega$, respectively. Later on, we will analyse the generalization error as well with respect to such a uniform distribution.

\section{Optimization Guarantee for Projected GD for PINNs}
In this section, we provide a non-asymptotic optimization guarantee on the empirical loss for the projection gradient descent algorithm described in \Cref{alg:pinn optimization}. Informally, we show that 
\begin{align}
    \frac{1}{T}\sum_{t=0}^{T-1} E_{S_t}(\theta(t)) \precsim \frac{1}{\sqrt T} + \frac{1}{\sqrt m} + \epsilon_{\textup{approx}}(u^*, p), 
\end{align}
with high probability over the initialization, where $p>0$ is the projection radius and $\epsilon_{\textup{approx}}(u^*, p)$ the approximation error of $u^*$ with networks of arbitrary size that are contained in the ball with radius $p$ around the initialization. 

In the context of PINNs, the integration points in the numerical discretization of the integrals of the continuous loss function~\eqref{eq:loss_function_exact} play the role of data points. 
Hence, data points are cheap to generate, and it is common to continuously sample new points~\cite{raissi2019physics, cuomo2022scientific}. 
Thus, PINNs naturally operate in an underparametrized regime. 
Where existing results guarantee exponential convergence in the quadratically over-parametrized regime, our result is located in the non-over-parametrized setting. We pay for this with a rate of $\frac{1}{\sqrt{T}}$ instead of the usual exponential convergence rate~\cite{Du2019,gao2023gradient} one obtains under quadratic over-parametrization with a fixed data set. 

The main steps to obtain our optimization result consist in bounding a linearization error -- which is possible because of the projection in \cref{alg:pinn optimization} -- using the linearized version to make statements about the convergence properties, and carrying on the function approximation error throughout the proof.

\subsection{Neural Tangent Kernels and Reproducing Kernel Hilbert Spaces} 
In the proof of our main result, we compare the training dynamics of the network to the training dynamics of a linearized model. Therefore, we consider the \emph{linearized neural network} around the initialization and denote it by 
\begin{align}\label{eq:linearization}
        F^{\text{lin}}(x; \theta) &\coloneqq \underbrace{F(x; \theta(0))}_{=0} + \sum_{i=1}^m \nabla_{\theta_i}^T F(x; \theta(0)) \el \theta_i - \theta_i(0)\er.
\end{align}
Note that $F^{\textup{lin}}$ is a linear function of $\theta$ and thus can be written as a linear model with features $\Phi_i(x) = \nabla_{\theta_i} F(x; \theta(0))$. 
These features depend on the random initialization and thus $F^{\textup{lin}}$ is a \emph{random feature model} introduced by \cite{rahimi2007random} with features 
\begin{align}\label{eq:NTK-features}
    \Phi_{\textup{NTK}}(x;\theta) \coloneqq \nabla_\theta\sigma(\theta^T x) = x \sigma'(\theta^T x). 
\end{align}
Here, NTK stands for \emph{neural tangent kernel} as the features come from a linearization of the neural network model, which spans the tangent space of the nonlinear model in function space, see~\cite{jacot2018neural}. 

Associated with a random feature model with feature map $\Phi$ is the Hilbert space 
\begin{align}
    \mathcal H_{\textup{RF}} \coloneqq \left\{ u\colon\R^d\to\R \mid u(x) = 
    \mathbb E_{\theta} \el v(\theta)^T \Phi(x;\theta) \er \textup{ with } \E_{\theta} \el\lVert v(\theta) \rVert_2^2\er < \infty \right\}, 
\end{align}
where we call $v$ the \emph{transport mapping} associated to $u$, see \cite{rahimi2008uniform, ji2019neural}. 
The inner product in $\mathcal{H}_{\textup{RF}}$ is given by 
\begin{align}
    \langle u_1, u_2\rangle_{\mathcal H_{\textup{RF}}} \coloneqq \E_{\theta} \el v_1(\theta)^T v_2(\theta) \er,
\end{align}
where $v_i$ denotes the transport mapping of $u_i$. 
This forms a reproducing kernel Hilbert space (RKHS) with the kernel 
\begin{align}
    K_{\textup{RF}}(x,x') \coloneqq \E_{\theta} \el\Phi(x;\theta)^T \Phi(x';\theta)\er.
\end{align}
For the random features defined in~\eqref{eq:NTK-features} the corresponding kernel is known as the neural tangent kernel (NTK) and given by 
\begin{align}
     K_{\text{NTK}}(x, x') = x^T x' \cdot \E_{\theta} \el \sigma'(\theta^T x) \sigma'(\theta^T x') \er
\end{align}
and was first introduced in \cite{jacot2018neural} to describe the learning dynamics of neural network training and subsequently used to establish exponential convergence under massive over-parametrization~\cite{du2019gradient, arora2019fine}. 
We refer the interested reader to the excellent overview and introduction to NTK theory provided in~\cite{bowman2023spectral}. 
We refer to the corresponding reproducing kernel Hilbert space as \emph{neural tangent RKHS}, which is given by 
\begin{align}
    \mathcal H_{\textup{NTK}} = \left\{ u\colon \mathbb R^d\to\mathbb R \mid u(x) = \mathbb E_{\theta}[\Phi_{\text{NTK}}(x;\theta)^T v(\theta)], \text{ where } \mathbb E_{\theta}[\lVert v(\theta) \rVert_2^2]<+\infty\right\},
\end{align}
see also \cite{rahimi2008uniform} for a general discussion of random feature RKHS.

To see the connection of the linearization~\eqref{eq:linearization} to the expression of functions in the neural tangent RKHS via a transport function, we note that 
\begin{align}
    F^{\textup{lin}}(x;\theta) = \frac{1}{m}\sum_{i=1}^m \Phi_{\textup{NTK}}(x;\theta_i(0))^T v_i,
\end{align}
where $v_i =c_i \sqrt{m} \cdot(\theta_i - \theta_i(0))$. 
In the projected gradient descent described in \Cref{alg:pinn optimization} we enforce $\lVert\theta_i-\theta_i(0)\rVert_2\le \frac{p}{\sqrt{m}}$ and thus we have $\lVert v_i \rVert_2 \le p$. 
Hence, the functions that can be expressed by the linearized neural network with the parameters satisfying $\lVert \theta_i - \theta_i(0)\rVert_2\le \frac{p}{\sqrt{m}}$ can be interpreted as discretizations of functions contained in the class
\begin{align}
    \mathcal{F}^p \coloneqq \left\{
        u\in\mathcal H_{\textup{NTK}} \mid  u(x) = \mathbb E_{\theta}[\Phi_{\text{NTK}}(x;\theta)^T v(\theta)], \text{ where } \sup_{\theta\in \R^d} \norm{v(\theta)}_2 \leq p 
    \right\} 
\end{align}
of functions with bounded transport mapping. 
Conversely, for a bounded transport function with $\norm{v(\theta)}_2\le p$, we can construct parameters 
\begin{align}\label{eq:optimal weights}
    \theta_i \coloneqq \theta_i(0) + \frac{c_i}{\sqrt{m}} \cdot v(\theta_i(0))
\end{align}
such that $\lVert \theta_i - \theta_i(0) \rVert_2\le \frac{p}{\sqrt{m}}$.

\subsection{Linearization and Approximation Errors}
Here, we bound the linearization error and the approximation error of the linearized model. We defer all proofs to the appendix.

We make the following assumption on the regularity of the activation. 

\begin{assumption}\label{ass:smoothness}
    The activation function satisfies $\lVert{\sigma^{(k)}}\rVert_\infty \leq \sigma_k$ for $k=1, \dots, 4$. 
\end{assumption}
\begin{rem}
    \Cref{ass:smoothness} allows for unbounded activations, as long as the first four derivatives are globally bounded. 
    We require this in our theoretical analysis for various estimates in the linearization error and in the proof of our optimization bound. 
    In particular, the hyperbolic tangent, the sigmoid function, sine and cosine as well as smoothed ReLUs (e.g. Swish functions) satisfy \Cref{ass:smoothness}.  
    Note that powers of the ReLU activation do not satisfy \Cref{ass:smoothness} as they have unbounded derivatives. 
    Whereas those activations arise in theoretical works~\cite{gao2023gradient,luo2020two, xu2024convergenceNG} they are not commonly used in practice~\cite{wang2023expert, ABBASI2024128352, CiCP-34-4, Maczuga23}. 
\end{rem}
\begin{lem}[Linearization error]\label{lemma:linearization error}
Let $m\in \N_e$, fix $\theta(0)$ and let $\theta_i\in {B_2}(\theta_i(0), \frac{p}{\sqrt{m}})$ for all $i\in\{1, \ldots, m\}$, then there exists a constant $C = C(\sigma_2, \sigma_3, \sigma_4)>0$ such that for all $x\in \R^d$ we have
        \begin{align}
            \betrag{\Delta F\kl x; \theta\kr- \sum_{i=1}^m \nabla_{\theta_i}^T \Delta F\kl x; \theta(0) \kr \el \theta_i - \theta_i(0)\er} &\leq C\frac{\norm{x}_2^2 p^2}{\sqrt{m}}  \quad \text{and} \label{eq:linearization error laplace}\\
            \betrag{F\kl x; \theta\kr- \sum_{i=1}^m \nabla_{\theta_i}^T F\kl x; \theta(0) \kr \el \theta_i - \theta_i(0)\er} & \le
            \frac{\sigma_2\norm{x}^2_2 p^2}{\sqrt{m}}. 
            \label{eq:linearization error}
        \end{align}
\end{lem}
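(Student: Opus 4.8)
The plan is to prove both inequalities by a Taylor expansion of the network (resp.\ its Laplacian) around the initialization $\theta(0)$ and to control the resulting remainder terms using the boundedness of the derivatives of $\sigma$ from \cref{ass:smoothness} together with the constraint $\lVert\theta_i-\theta_i(0)\rVert_2\le p/\sqrt m$. The second bound \eqref{eq:linearization error} is the easier one: for each neuron $i$, the map $\theta_i\mapsto c_i\sigma(\theta_i^Tx)$ has gradient $c_i x\,\sigma'(\theta_i^Tx)$, so by Taylor's theorem with Lagrange remainder there is a $\xi_i$ on the segment between $\theta_i(0)$ and $\theta_i$ with
\begin{align*}
    c_i\sigma(\theta_i^Tx)-c_i\sigma(\theta_i(0)^Tx)-c_i\,x^T(\theta_i-\theta_i(0))\,\sigma'(\theta_i(0)^Tx)
    = \tfrac{c_i}{2}\,(x^T(\theta_i-\theta_i(0)))^2\,\sigma''(\xi_i^Tx).
\end{align*}
Summing over $i$, using $\lvert c_i\rvert = 1/\sqrt m$, $\lvert x^T(\theta_i-\theta_i(0))\rvert\le \lVert x\rVert_2\,p/\sqrt m$ by Cauchy--Schwarz, and $\lvert\sigma''\rvert\le\sigma_2$, the $m$ terms each contribute at most $\tfrac12\sigma_2\lVert x\rVert_2^2 p^2/m^{3/2}$, giving the claimed $\sigma_2\lVert x\rVert_2^2 p^2/\sqrt m$ (the factor $1/2$ is absorbed). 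One should double-check that $F(x;\theta(0))=0$ and the symmetric pairing of neurons is not actually needed here — the bound holds neuron-by-neuron regardless.

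For \eqref{eq:linearization error laplace} the strategy is the same but applied to $\Delta_x$ of a single neuron. Writing out $\Delta_x\big(c_i\sigma(\theta_i^Tx)\big) = c_i\lVert\theta_i\rVert_2^2\,\sigma''(\theta_i^Tx)$, I would view this as a function $g_i(\theta_i) \coloneqq c_i\lVert\theta_i\rVert_2^2\,\sigma''(\theta_i^Tx)$ of $\theta_i\in\R^d$ and Taylor-expand $g_i$ to first order around $\theta_i(0)$, so that the remainder is controlled by $\sup$ of $\lVert \nabla^2_{\theta_i} g_i\rVert$ over the segment times $\lVert\theta_i-\theta_i(0)\rVert_2^2 \le p^2/m$. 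Computing $\nabla_{\theta_i}g_i$ and $\nabla^2_{\theta_i}g_i$ produces terms of the form $c_i\big(\text{const}\cdot\sigma''(\theta_i^Tx) + \lVert\theta_i\rVert_2^2\sigma^{(4)}(\theta_i^Tx)\,x^{\otimes 2} + \text{cross terms with }\sigma^{(3)}\big)$; each factor is bounded using $\lvert\sigma^{(k)}\rvert\le\sigma_k$ for $k=2,3,4$, $\lvert c_i\rvert=1/\sqrt m$, and the boundedness of $\lVert\theta_i\rVert_2$ (which follows from $\lVert\theta_i-\theta_i(0)\rVert_2\le p/\sqrt m$ — note $\theta_i(0)$ is fixed, so $\lVert\theta_i\rVert_2$ is bounded in terms of $\lVert\theta_i(0)\rVert_2$, though to get a clean $C(\sigma_2,\sigma_3,\sigma_4)$ one may want to be a little careful and perhaps absorb $\theta_i(0)$-dependence into the implied constant or exploit that $\lVert\theta_i(0)\rVert_\infty\le a$ by truncation). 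Each of the $m$ neurons then contributes $O(\lVert x\rVert_2^2 p^2/m^{3/2})$, and summing gives the rate $\lVert x\rVert_2^2 p^2/\sqrt m$.

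I expect the main obstacle to be the bookkeeping in the second-order derivative of $g_i$ for the Laplacian bound: there are several product-rule terms mixing $\sigma''$, $\sigma^{(3)}$, $\sigma^{(4)}$ and polynomial factors $\lVert\theta_i\rVert_2^2$, $\theta_i^Tx$, $\lVert x\rVert_2^2$, and one has to verify that every term carries a factor of at most $\lVert x\rVert_2^2$ (and not a higher power of $\lVert x\rVert$) so that the stated bound is correct. The key cancellation/structure to track is that $\Delta_x$ brings down exactly two factors related to $x$ (giving $\lVert x\rVert_2^2$ or $\lVert\theta_i\rVert_2^2$-type terms), and differentiating once more in $\theta_i$ with the segment-constraint $\lVert\theta_i-\theta_i(0)\rVert_2\le p/\sqrt m$ is what yields the extra $1/\sqrt m$. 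I would also handle the dependence of the constant on $\lVert\theta_i(0)\rVert$ by noting the truncated-normal initialization has support in $[-a,a]^d$ so $\lVert\theta_i(0)\rVert_2\le a\sqrt d$ is an absolute constant, which can be folded into $C$.
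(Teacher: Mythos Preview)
Your approach is essentially the paper's: both prove the bounds by Taylor-expanding each neuron (resp.\ its Laplacian) in $\theta_i$ around $\theta_i(0)$ and controlling the second-order remainder via \cref{ass:smoothness} together with $\lVert\theta_i-\theta_i(0)\rVert_2\le p/\sqrt m$. The paper does this by explicit componentwise manipulation (and for \eqref{eq:linearization error} simply cites an external reference), whereas you phrase it more compactly via the Hessian of $g_i(\theta_i)=c_i\lVert\theta_i\rVert_2^2\sigma''(\theta_i^Tx)$; the substance is the same.

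One correction: your remark that the symmetric initialization is ``not actually needed here'' is wrong. Your per-neuron Taylor argument bounds $\lvert F(x;\theta)-F(x;\theta(0))-(\text{linear term})\rvert$ (and likewise for $\Delta F$), but the lemma as stated has no zeroth-order term on the left-hand side. You therefore \emph{do} need $F(x;\theta(0))=0$ and $\Delta F(x;\theta(0))=0$, which follow precisely from the symmetric pairing $c_i=-c_{i+m/2}$, $\theta_i(0)=\theta_{i+m/2}(0)$; the paper's proof invokes this explicitly at the outset. A second remark: your worry about \emph{higher} powers of $\lVert x\rVert$ is misdirected---the delicate term in $\nabla^2_{\theta_i}g_i$ is $2I\,\sigma''(\cdot)$, which carries \emph{no} factor of $\lVert x\rVert$, so your bound is really of the form $C(1+\lVert x\rVert_2+\lVert x\rVert_2^2)\,p^2/\sqrt m$ with $C$ also depending on $\sup_i\lVert\theta_i(0)\rVert_2$. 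The paper's own proof ends only with $\precsim 1/\sqrt m$ and does not track the stated $\lVert x\rVert_2^2$ prefactor either; since the lemma is applied only on the bounded domain $\overline\Omega$, this discrepancy is harmless for the downstream results.
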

Next, we bound the approximation error of a finite-width neural network and its Laplacian under random initialization. 

\begin{lem}[Approximation error]\label{lemma:approximation error}
Let $m\in \N_e$, fix $\theta(0)$ initialized according to \eqref{eq: ntk initialization}, fix a projection radius $p>0$, let $u\in \mathcal{F}^p$ be fixed with corresponding transportation mapping $v$ and define $\theta_i$ accordingly as in \eqref{eq:optimal weights}. Let $\delta\in(0,1)$ and set $\eps \coloneqq u^* - u$. 
Then the following statements hold: 
\begin{enumerate}
    \item There exists a constant $C = C(a, \sigma_2, \sigma_3)>0$ such that for all $x\in \Omega$ with probability at least $1-\delta$ we have
    \begin{align}
        \betrag{\Delta u^*(x)- \sum_{i=1}^m \nabla_{\theta_i}^T \Delta F\kl x; \theta(0) \kr \el \theta_i - \theta_i(0)\er} \leq C \frac{\sqrt{\log \kl \frac{1}{\delta}\kr }}{\sqrt{m}} + \abs{\Delta\eps(x)}. 
    \end{align}
    \item There exists a constant $C = C(a, \sigma_1)>0$ such that for all $x\in \partial\Omega$ with probability at least $1-\delta$ we have 
     \begin{align}
         \betrag{u^*(x)- \sum_{i=1}^m \nabla_{\theta_i}^T F\kl x; \theta(0) \kr \el \theta_i - \theta_i(0)\er} \leq C \frac{\sqrt{\log \kl \frac{1}{\delta}\kr }}{\sqrt{m}} + \abs{\eps(x)}. 
     \end{align}
\end{enumerate}
\end{lem}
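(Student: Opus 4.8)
The plan is to prove both statements by the same two-step recipe: use the triangle inequality to peel off the approximation error $\eps$, and then recognize the remaining linearized quantity as the empirical average of i.i.d.\ random variables built from the initial hidden weights $\theta_i(0)$, to which a Hoeffding-type concentration inequality applies. I describe part~(1) in detail; part~(2) is the same argument with $F$ in place of $\Delta F$ and is strictly easier. Write $S(x)\coloneqq\sum_{i=1}^m\nabla_{\theta_i}^T\Delta F(x;\theta(0))\,[\theta_i-\theta_i(0)]$, so that the claim in~(1) is a bound on $\betrag{\Delta u^*(x)-S(x)}$. Since $u^*-u=\eps$, the triangle inequality immediately gives
\begin{align*}
    \betrag{\Delta u^*(x)-S(x)} \;\le\; \betrag{\Delta u^*(x)-\Delta u(x)} + \betrag{\Delta u(x)-S(x)} \;=\; \betrag{\Delta\eps(x)} + \betrag{\Delta u(x)-S(x)},
\end{align*}
so the whole task reduces to controlling the second term.

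For this, I would insert the weights from~\eqref{eq:optimal weights} into $S(x)$, use the commutation $\nabla_{\theta_i}\Delta_x F=\Delta_x\nabla_{\theta_i}F$, and the normalization $c_i^2=1/m$, to express $S(x)$ as the average of $m/2$ independent copies of the random variable
\begin{align*}
    Z \;\coloneqq\; \Delta_x\bigl(\Phi_{\textup{NTK}}(x;\vartheta)^T v(\vartheta)\bigr) \;=\; 2\bigl(\vartheta^T v(\vartheta)\bigr)\sigma''(\vartheta^T x) + \norm{\vartheta}_2^2\,\bigl(x^T v(\vartheta)\bigr)\sigma'''(\vartheta^T x), \qquad \vartheta\sim\mathcal{N}_a(0,I_d),
\end{align*}
where the reduction from $m$ to $m/2$ summands comes from the antithetic initialization $\theta_i(0)=\theta_{i+m/2}(0)$ together with $c_i^2=c_{i+m/2}^2$. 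On the other hand $u(x)=\E_\vartheta[\Phi_{\textup{NTK}}(x;\vartheta)^T v(\vartheta)]$ by definition of $\mathcal{F}^p$, and since $\sigma$ has bounded derivatives up to order three by \cref{ass:smoothness}, the support of $\mathcal{N}_a(0,I_d)$ is contained in $[-a,a]^d$, $\norm{v}_2\le p$ and $\Omega$ is bounded, the integrand together with its first- and second-order $x$-derivatives is dominated by a constant; hence differentiation under the expectation is legitimate and $\E[Z]=\Delta u(x)$.

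The same estimates give the almost-sure bound $\betrag{Z}\le B$ for a constant $B$ depending only on $a,\sigma_2,\sigma_3$ (and the fixed quantities $d$, $p$ and $\sup_{x\in\Omega}\norm{x}_2$), using $\norm{\vartheta}_\infty\le a$, $\norm{v(\vartheta)}_2\le p$ and \cref{ass:smoothness}. Hoeffding's inequality for the $m/2$ i.i.d.\ bounded summands then yields, for every fixed $x\in\Omega$ and $\delta\in(0,1)$, a bound $\betrag{S(x)-\Delta u(x)}\le C\sqrt{\log(1/\delta)/m}$ with $C=C(a,\sigma_2,\sigma_3)$, holding with probability at least $1-\delta$ over the random initialization; combined with the triangle inequality above this proves~(1). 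Part~(2) is obtained verbatim with $Z$ replaced by $\Phi_{\textup{NTK}}(x;\vartheta)^T v(\vartheta)$ itself — so that no differentiation under the expectation is needed at all — and with $\Delta u$, $\Delta\eps$ replaced by $u$, $\eps$, evaluated now at $x\in\partial\Omega$.

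The step I expect to demand the most care is the direct computation that identifies $S(x)$ as an \emph{unbiased} average with mean $\Delta u(x)$: one must differentiate $x\mapsto\Delta F(x;\theta(0))=\sum_j c_j\norm{\theta_j(0)}_2^2\sigma''(\theta_j(0)^T x)$ with respect to $\theta_i$ — which, unlike the boundary term, generates the $\theta_i(0)$-dependent prefactors $2\,\theta_i(0)$ and $\norm{\theta_i(0)}_2^2\,x$ — and then track the $c_i$-normalization and the antithetic pairing so that the mean comes out exactly $\Delta u(x)$, while also verifying the domination hypothesis that licenses exchanging $\Delta_x$ with $\E_\vartheta$. None of this is deep, but it is precisely where a stray $\sqrt m$-factor or sign error could creep in.
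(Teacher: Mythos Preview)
Your proposal is correct and follows essentially the same route as the paper: peel off $\lvert\Delta\eps(x)\rvert$ (resp.\ $\lvert\eps(x)\rvert$) via the triangle inequality, identify the linearized sum as an empirical average of bounded i.i.d.\ terms with mean $\Delta u(x)$ (resp.\ $u(x)$), and apply Hoeffding. Your handling of the antithetic pairing (reducing to $m/2$ independent summands) and the dominated-convergence justification for $\E[Z]=\Delta u(x)$ are in fact slightly more careful than the paper's presentation, which simply asserts the formula for $\Delta u$ and invokes Hoeffding directly.
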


Now we have collected the auxiliary results for the proof of the main theorem. 

\subsection{Finite-Time Bound for Projected Gradient Descent} 
With that, we can formulate our main optimization guarantee.

\begin{thm}[Performance of Projected Gradient Descent]\label{thm:optimization bound}
Consider a bounded $C^{2,\beta}$ 
domain $\Omega\subseteq\mathbb R^d$ and set $c_\Omega\coloneqq\sup\{\lVert x \rVert \mid x\in\Omega\}$ and let $u^\ast\in C^{2,\beta}(\Omega)\cap C(\overline{\Omega})$ be the solution of the Poisson equation~\eqref{eq:poisson with zero boundary} for some $f\in C^{\beta}(\Omega)$ and $\beta>0$. Further, let $\theta(0)$ be a fixed sample according to the symmetric initialization \eqref{eq: ntk initialization} where the inner weights are sampled from a truncated Gaussian
with truncation parameter $a>0$, see~\Cref{definition: truncated normal distribution}. We run \cref{alg:pinn optimization} for $T$ time steps with batch sizes $b_\Omega, b_{\partial \Omega}$, arbitrary integration sets $(S_t)_{t=0}^{T-1}$, step size $\eta = \frac{1}{\sqrt{T}}$, some $p>0$, and penalization strength $\lambda>0$. 
Let \Cref{ass:smoothness} hold and set 
\begin{align}
    \eps_p \coloneqq \inf_{u\in \mathcal{F}^p} \left( 
    \norm{u-u^\ast}_{\infty(\partial \Omega)} + \norm{\Delta u-\Delta u^\ast}_{\infty(\Omega)}
    \right). 
\end{align}
Then, there exist constants $C_1, C_2>0$ and $C_3>0$ depending polynomially on the quantities $a, d, \lambda, c_{\Omega}, p, \sigma_1, \sigma_2, \sigma_3$ and $\sigma_4$ such that for any $\delta \in (0,1)$ it holds that
\begin{align}
    \frac{1}{T} \sum_{t=0}^{T-1} E_{S_t}(\theta(t)) \leq \frac{C_1}{\sqrt{T}} + \frac{C_2 \sqrt{\log \kl \frac{\max \{b_\Omega, b_{\partial \Omega}\}}{\delta}\kr }}{\sqrt{m}}+ C_3 \eps_{p}
\end{align}
with probability at least $1-2\delta$ over the random initialization.
\end{thm}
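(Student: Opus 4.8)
The plan is to run a Lyapunov drift analysis comparing the iterates $\theta(t)$ of \cref{alg:pinn optimization} with the reference weights $\theta^\star\in B_{2,\infty}(\theta(0),p/\sqrt m)$ defined componentwise by $\theta^\star_i\coloneqq\theta_i(0)+\tfrac{c_i}{\sqrt m}v(\theta_i(0))$ as in \eqref{eq:optimal weights}, where $v$ is the transport mapping of some $u\in\mathcal{F}^p$ that is $\rho$-suboptimal for $\eps_p$, that is, $\norm{u-u^\ast}_{\infty(\partial\Omega)}+\norm{\Delta u-\Delta u^\ast}_{\infty(\Omega)}\le\eps_p+\rho$; the arbitrary $\rho>0$ is sent to $0$ at the end. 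Since $\theta^\star\in B_{2,\infty}(\theta(0),p/\sqrt m)$ and the Euclidean projection onto this convex set is non-expansive, the potential $V_t\coloneqq\norm{\theta(t)-\theta^\star}_2^2$ satisfies
\begin{align}
 V_{t+1}\ \le\ V_t-2\eta\,\langle\nabla E_{S_t}(\theta(t)),\,\theta(t)-\theta^\star\rangle+\eta^2\norm{\nabla E_{S_t}(\theta(t))}_2^2 ,
\end{align}
and summing over $t=0,\dots,T-1$, dropping $-V_T\le 0$, and using $\eta=1/\sqrt T$ gives $\tfrac{2}{\sqrt T}\sum_t\langle\nabla E_{S_t}(\theta(t)),\theta(t)-\theta^\star\rangle\le V_0+\tfrac1T\sum_t\norm{\nabla E_{S_t}(\theta(t))}_2^2$. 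This is deterministic for every realization of $(S_t)_t$, so the integration sets being arbitrary and adaptive causes no problem; randomness enters only through the initialization.

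The heart of the proof is the pointwise drift lower bound $\langle\nabla E_{S_t}(\theta(t)),\theta(t)-\theta^\star\rangle\ge c\,E_{S_t}(\theta(t))-\mathrm{Err}$ with an absolute constant $c>0$ and $\mathrm{Err}$ uniform in $t$. To obtain it I write $E_{S_t}$ as an average of squared residuals $r_j(\theta)^2$, with $r_j(\theta)=\Delta F(x_j;\theta)+f(x_j)$ for interior points and $r_j(\theta)=F(y_j;\theta)$ for boundary points, and use $\langle\nabla(r_j^2)(\theta(t)),\theta(t)-\theta^\star\rangle=2r_j(\theta(t))\langle\nabla r_j(\theta(t)),\theta(t)-\theta^\star\rangle$. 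Since the linearized residual $r_j^{\mathrm{lin}}$ built from $F^{\mathrm{lin}}$ in \eqref{eq:linearization} is affine in $\theta$, we have $\langle\nabla r_j^{\mathrm{lin}}(\theta(t)),\theta(t)-\theta^\star\rangle=r_j^{\mathrm{lin}}(\theta(t))-r_j^{\mathrm{lin}}(\theta^\star)$, and I pass from the nonlinear to the linearized quantities via three ingredients: $\betrag{r_j(\theta(t))-r_j^{\mathrm{lin}}(\theta(t))}=\mathcal O(p^2/\sqrt m)$ by \cref{lemma:linearization error}; $\norm{\nabla r_j(\theta(t))-\nabla r_j^{\mathrm{lin}}(\theta(t))}_2=\mathcal O(p/\sqrt m)$, since the $\theta$-Hessian of $\Delta F(x_j;\cdot)$ (resp.\ $F(y_j;\cdot)$) is block-diagonal with operator norm $\mathcal O(1/\sqrt m)$ on $B_{2,\infty}(\theta(0),p/\sqrt m)$ under \cref{ass:smoothness} while $\norm{\theta(t)-\theta(0)}_2\le p$, so its pairing with $\theta(t)-\theta^\star$ is $\mathcal O(p^2/\sqrt m)$; and $r_j^{\mathrm{lin}}(\theta^\star)=\Delta F^{\mathrm{lin}}(x_j;\theta^\star)-\Delta u^\ast(x_j)$ (resp.\ $F^{\mathrm{lin}}(y_j;\theta^\star)-u^\ast(y_j)$), which is small by \cref{lemma:approximation error}. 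Collecting terms yields $\langle\nabla(r_j^2)(\theta(t)),\theta(t)-\theta^\star\rangle=2r_j(\theta(t))^2-2r_j(\theta(t))\cdot(\mathrm{small})$; the decisive step is to bound the cross term by $2\sup_j\betrag{r_j(\theta(t))}\cdot(\mathrm{small})$ using a uniform a-priori residual bound $\betrag{r_j(\theta(t))}\le C_{\mathrm{res}}$, \emph{not} by completing a square, so that $\eps_p$ and $m^{-1/2}$ appear to the first power rather than as $\eps_p^2$ and $m^{-1}$. This gives $\mathrm{Err}\precsim \tfrac{1}{\sqrt m}\sqrt{\log(\max\{b_\Omega,b_{\partial\Omega}\}/\delta)}+\eps_p+\rho$. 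Both $C_{\mathrm{res}}$ and the gradient bound $\norm{\nabla E_{S_t}(\theta(t))}_2^2\le G^2$, with $G$ polynomial in $a,d,\lambda,c_\Omega,p,\sigma_1,\dots,\sigma_4$, follow from \cref{lemma:linearization error} together with $F(\cdot;\theta(0))=0$ (symmetric initialization), which is exactly what prevents a sum of $m$ terms with $\betrag{c_i}=m^{-1/2}$ from being only $\mathcal O(\sqrt m)$.

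Plugging the drift lower bound into the telescoped inequality, and using $V_0=\sum_i\norm{\tfrac{c_i}{\sqrt m}v(\theta_i(0))}_2^2\le p^2/m$ together with $\tfrac1T\sum_t\norm{\nabla E_{S_t}(\theta(t))}_2^2\le G^2$, I obtain $\tfrac1T\sum_{t=0}^{T-1}E_{S_t}(\theta(t))\precsim (V_0+G^2)/\sqrt T+\mathrm{Err}$, which is the claimed bound with $\eps_p$ replaced by $\eps_p+\rho$. The probabilistic content is that \cref{lemma:approximation error} is invoked at the quadrature points appearing during the run: a union bound over these points turns the lemma's $\sqrt{\log(1/\delta)}$ into $\sqrt{\log(\max\{b_\Omega,b_{\partial\Omega}\}/\delta)}$, applied once for the interior part and once for the boundary part, so everything holds on an event of probability at least $1-2\delta$. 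Finally, since the left-hand side does not depend on $\rho$ and $\rho>0$ was arbitrary, letting $\rho\downarrow 0$ concludes.

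I expect the main obstacle to be the careful control of $\mathrm{Err}$: one must (i) supply the gradient-level linearization estimate $\nabla_\theta\Delta F(x;\theta(t))\approx\nabla_\theta\Delta F(x;\theta(0))$, which is not literally \cref{lemma:linearization error} but follows from the Hessian bound under \cref{ass:smoothness}; (ii) organize the estimates so that the cross terms are linear in the small quantities, which forces using the uniform residual bound instead of an AM--GM step and is what produces the $\eps_p$ and $m^{-1/2}$ (rather than squared) rates; and (iii) keep every constant polynomial in $a,d,\lambda,c_\Omega,p,\sigma_1,\dots,\sigma_4$, exploiting the symmetric initialization at each place where a crude bound would otherwise lose a factor $\sqrt m$.
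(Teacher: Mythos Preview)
Your proposal is correct and follows essentially the same route as the paper's proof: the same Lyapunov potential $\lVert\theta(t)-\bar\theta\rVert_F^2$, non-expansivity of the projection, an upper bound on $\lVert\nabla E_{S_t}\rVert_2^2$, a lower bound on the drift $\langle\nabla E_{S_t}(\theta(t)),\theta(t)-\bar\theta\rangle\ge 2E_{S_t}(\theta(t))-\mathrm{Err}$ obtained via the linearization and approximation lemmas (your ``gradient-level linearization estimate'' is precisely \cref{lemma:technical inequalities}), the uniform residual bound to keep the cross terms linear in $\eps_p$ and $m^{-1/2}$, the union bound over the quadrature points, telescoping, and sending the slack $\rho$ (the paper's $\xi$) to zero. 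One harmless slip: $V_0\le p^2$, not $p^2/m$, since you are summing $m$ terms each bounded by $p^2/m$; this does not affect the argument or the rate.
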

\begin{rem}[Realizability]
    If the solution of \eqref{eq:poisson with zero boundary} is realizable, meaning that there exists an $\alpha > 0$ such that $u^* \in \mathcal{F}^\alpha$, then the approximation error $\eps_{\text{approx}}(u^*, p)$ is $0$ for any $p\geq \alpha$. Since $C_1$ and $C_2$ depend polynomially on $p$, there is a tradeoff between the approximation error and the linearization error. With prior knowledge on $\alpha$ it is possible to choose $p$ as small as possible to avoid unnecessarily large $m$ and $T$ to achieve a prescribed accuracy.
\end{rem}
\cref{thm:optimization bound} shows that the optimization procedure converges with order $1/\sqrt{T} + 1/\sqrt{m}$ up to an approximation error with high probability. Since we have not yet looked at the generalization properties of $E_{S_t}$ to $E$, this optimization bound is independent of the chosen data points. 
In contrast to previous results in the NTK regime, we do not need a massive over-parametrization. This comes at the cost of a slower rate in $T$ compared to previous works.
\begin{proof}[Proof of \cref{thm:optimization bound}]
For a function $u\in C^{2}(\Omega)$, we define 
\begin{align}
        \norm{u}_{W^{0,2;\infty}} \coloneqq \norm{u}_{\infty(\partial \Omega)} + \norm{\Delta u}_{\infty(\Omega)} 
\end{align}
and for $\xi>0$ we fix a function
\begin{align*} 
    u_p^\xi \in \left\{ w\in \mathcal{F}^p \,\middle|\,
		\norm{w-u^*}_{W^{0,2;\infty}}
		< \eps_p + \xi \right\}
\end{align*} 
that achieves the optimal approximation error $\eps_p$ up to $\xi$. 
We denote the transportation mapping corresponding to $u^\xi_p$ by $v_{p,\xi}$
and set
\begin{align*}
    \eps_{p}^\xi(x)&\coloneqq u^*(x)-u_p^\xi(x). 
\end{align*}
Set $\overline{\theta}$ to be the weights according to $v_{p,\xi}$ as in \eqref{eq:optimal weights}.
Now let
    \[
        \mathcal{L}(W) = \norm{\theta-\overline{\theta}}_F^2 = \sum_{i=1}^m \norm{\theta_i - \overline{\theta_i}}_2^2
    \]
		be the potential function, where $\norm{M}_F$ denotes the Frobenius norm of the matrix $M$. We recall that $\theta_i$ denotes the weight vector of the $i$-th vector. By the nonexpansivity of the projection operator onto $B_i \coloneqq B_2 \kl \theta_i(0), \frac{p}{\sqrt{m}}\kr$ there holds 
		\begin{align*}
			\mathcal{L}(\theta(t+1)) &= \sum_{i=1}^m \norm{\theta_i(t+1) - \overline{\theta_i}}_2^2 \\
			&= \sum_{i=1}^m \norm{\pi_{B_i}\kl \widetilde{\theta_i}(t+1)\kr - \pi_{B_i}\kl \overline{\theta_i}\kr}_2^2 \\
			&\leq \sum_{i=1}^m \norm{\widetilde{\theta_i}(t+1) - \overline{\theta_i}}_2^2.
		\end{align*}
		Thus we compute
		\begin{align}\label{eq:lyapunov inequality}
			\begin{split}
			    &\quad\ \mathcal{L}(\theta(t+1)) \\
			&\leq \sum_{i=1}^m \norm{\widetilde{\theta_i}(t+1) - \overline{\theta_i}}_2^2 \\
			&= \sum_{i=1}^m \norm{\theta_i(t) - \overline{\theta_i} - \eta \nabla_{\theta_i} E_{S_t}(\theta(t))}_2^2 \\
			&= \sum_{i=1}^m \norm{\theta_i(t) - \overline{\theta_i}}_2^2 - 2\eta \underbrace{\sum_{i=1}^m \nabla_{\theta_i}^T E_{S_t}(\theta(t))\el \theta_i(t) - \overline{\theta_i}\er}_{\eqqcolon (**)} + \eta^2 \underbrace{\sum_{i=1}^m \norm{\nabla_{\theta_i} E_{S_t}(\theta(t))}_2^2}_{\eqqcolon (*)}.
			\end{split}
    \end{align}
    The goal in our proof consists of upper bounding $(*)$ and lower bounding $(**)$. To get an estimate of the average iterate $\frac{1}{T} \sum_{t=0}^{T-1} E_{S_t}(\theta(t))$, we will ensure to lower bound $(**)$ in terms of $E_{S_t}$. Before bounding $(*)$ and $(**)$ separately, we observe that
		\begin{align*}
			\nabla_{\theta_i} E_{S_t}(\theta(t)) &= \frac{2}{b_\Omega} \sum_{j=1}^{b_\Omega} \kl \Delta F(x_j^t;\theta(t)) + f(x_j^t)\kr \Delta \kl \nabla_{\theta_i} F(x_j^t;\theta(t))\kr\notag\\ 
			&\quad\ + \frac{2\lambda}{b_{\partial \Omega}} \sum_{j=1}^{b_{\partial \Omega}} F(y_j^t; \theta(t)) \nabla_{\theta_i} F(y_j^t; \theta(t)),
		\end{align*}
		where the laplace operator $\Delta$ is applied entry-wise on the vector $\nabla_{\theta_i} F(x_j^t;\theta(t))$. 
  
        \noindent
        \textbf{Upper Bounding $(*)$:} We start with an upper bound for $(*)$. We can write
		\begin{equation*}
			\Delta \nabla_{\theta_i} F(x;\theta(t)) = \frac{c_i}{\sqrt{m}} \begin{bmatrix}
				\el \sum_{l=1}^d x_1 \theta_{il}^2(t) \sigma'''\kl \theta_i(t)^T x\kr \er + 2\theta_{i1}(t) \sigma''\kl \theta_i(t)^T x\kr \\
				\vdots \\
				\el \sum_{l=1}^d x_d \theta_{il}^2(t) \sigma'''\kl \theta_i(t)^T x\kr \er + 2\theta_{id}(t) \sigma''\kl \theta_i(t)^T x\kr
			\end{bmatrix}.
		\end{equation*}
		Since
		\[
			\betrag{2 \theta_{i1}(t) \sigma''\kl \theta_i(t)^T x\kr + \sum_{l=1}^d x_1 \theta_{il}^2(t) \sigma'''\kl \theta_i(t)^T x \kr} \leq 2 \norm{\theta_i(t)}_2 \sigma_2 + c_\Omega \sigma_3 \norm{\theta_i(t)}_2^2,
		\]
		we conclude
		\[
			\norm{\Delta \nabla_{\theta_i} F(x_j^t;\theta(t))}_2 \leq \frac{d}{\sqrt{m}} \kl 2\norm{\theta_i(t)}_2 \sigma_2 + c_\Omega \sigma_3 \norm{\theta_i(t)}_2^2 \kr
		\]
		and with 
		\[
			\norm{\theta_i(t)}_2 = \norm{\theta_i(t) - \theta_i(0) + \theta_i(0)}_2 \leq \frac{p}{\sqrt{m}} + \norm{\theta_i(0)}_2
		\]
		we obtain
		\begin{align*}
			\norm{\Delta \nabla_{\theta_i} F(x_j^t;\theta(t))}_2 &\leq \frac{d}{\sqrt{m}} \el 2\kl \frac{p}{\sqrt{m}} + \norm{\theta_i(0)}_2\kr \sigma_2 + c_\Omega \sigma_3 \kl \frac{p}{\sqrt{m}} + \norm{\theta_i(0)}_2\kr^2 \er \notag\\
			&\leq \frac{d}{\sqrt{m}} \el \frac{2p\sigma_2}{\sqrt{m}} + 2\norm{\theta_i(0)}_2 \sigma_2 + \frac{2 c_\Omega \sigma_3 p^2}{m} + {2 c_\Omega \sigma_3 \norm{\theta_i(0)}_2^2} \er \notag\\
			&\leq \frac{2dp\sigma_2 + 2dc_\Omega \sigma_3 p^2}{m} + \frac{2d\norm{\theta_i(0)}_2 \sigma_2 + 2dc_\Omega \sigma_3 \norm{\theta_i(0)}_2^2}{\sqrt{m}}\notag \\
			&\leq \frac{2dp\sigma_2 + 2dc_\Omega \sigma_3 p^2}{m} + \frac{2d \theta_a \sigma_2 + 2dc_\Omega \sigma_3 \theta_a^2}{\sqrt{m}} \notag \\
			&= \frac{c_1(d,c_\Omega,p, \sigma_2, \sigma_3)}{m} + \frac{c_2(a, d,c_\Omega,p, \sigma_2, \sigma_3)}{\sqrt{m}},
		\end{align*}
		where $c_1$ and $c_2$ are constants depending on the mentioned quantities. We compute
		\begin{align*}
			\betrag{F(x;\theta(t))} &= \betrag{F(x;\theta(t)) - F(x;\theta(0))} \notag \\
			&= \betrag{\frac{1}{\sqrt{m}} \sum_{i=1}^m c_i \sigma\kl \theta_i(t)^T x\kr - \frac{1}{\sqrt{m}} \sum_{i=1}^m c_i \sigma\kl \theta_i(0)^T x\kr} \notag \\
			&= \betrag{\frac{1}{\sqrt{m}} \sum_{i=1}^m c_i \el \sigma\kl \theta_i(t)^T x\kr - \sigma \kl \theta_i(0)^T x\kr \er}\notag \\
			&\leq \frac{1}{\sqrt{m}} \sum_{i=1}^m \underbrace{\betrag{\sigma \kl \theta_i(t)^T x\kr - \sigma \kl \theta_i(0)^T x\kr}}_{\begin{aligned}
					&\leq \sigma_1 \betrag{\theta_i(t)^T x - \theta_i(0)^T x} \\
					&= \sigma_1 \betrag{x^T \kl \theta_i(t) - \theta_i(0)\kr} \\
					&\leq \sigma_1 c_\Omega \norm{\theta_i(t) - \theta_i(0)}_2\\
					&\leq \frac{\sigma_1 c_\Omega p}{\sqrt{m}}
			\end{aligned}}\notag \\
			&\leq \sigma_1 c_\Omega p.
		\end{align*}
		Further, we observe
		\begin{equation*}
			\norm{\nabla_{\theta_i} F\kl x; \theta(t)\kr} = \norm{\frac{1}{\sqrt{m}}c_i x \sigma'\kl \theta_i(t)^T x\kr}_2 \leq \frac{c_\Omega \sigma_1}{\sqrt{m}}
		\end{equation*}
		as well as
		\begin{align*}
			&\quad\, \betrag{\Delta F\kl x; \theta(t)\kr} \notag \\
			&= \betrag{\Delta F\kl x; \theta(t)\kr - \Delta F\kl x;\theta(0)\kr}\notag \\
			&= \betrag{\sum_{j=1}^d \frac{1}{\sqrt{m}} \sum_{i=1}^m c_i \el \theta_{ij}^2(t) \sigma''\kl \theta_i(t)^T x\kr - \theta_{ij}^2(0) \sigma''\kl \theta_i(0)^T x \kr \er} \notag \\
			&\leq \sum_{j=1}^d \frac{1}{\sqrt{m}} \sum_{i=1}^m \frac{1}{\sqrt{m}} \betrag{\theta_{ij}^2(t)} \betrag{\sigma''\kl \theta_i(t)^T x\kr - \sigma''\kl \theta_i(0)^T x \kr} + \sigma_2 \betrag{\theta_{ij}^2(t) - \theta_{ij}^2(0)} \notag\\ 
			&\leq c_3\kl a, d, p, c_\Omega, \sigma_2\kr,
		\end{align*}
		where we used that $\Delta F\kl x; \theta(0)\kr = 0$ for all $x\in \overline{\Omega}$. 
        From now on, we will only mention the constants and not their dependence anymore. 
        Using the previous results, we can show
		\begin{align}\label{eq:bound norm of loss function}
			\begin{split}
       \norm{\nabla_{\theta_i} E_{S_t}(\theta(t))}_2
       &
   \leq \norm{\frac{2}{b_\Omega} \sum_{j=1}^{b_\Omega} \kl \Delta F(x_j^t;\theta(t)) + f(x_j^t)\kr \Delta \kl \nabla_{\theta_i} F(x_j^t;\theta(t))\kr}_2 \\ 
			&\quad\ + \norm{\frac{2\lambda}{b_{\partial \Omega}} \sum_{j=1}^{b_{\partial \Omega}} F(y_j^t; \theta(t)) \nabla_{\theta_i} F(y_j^t; \theta(t))}_2  \\
			&\leq \frac{2}{b_\Omega} \sum_{j=1}^{b_\Omega} \kl \betrag{\Delta F(x_j^t;\theta(t))} + \betrag{f(x_j^t)}\kr \norm{\Delta \kl \nabla_{\theta_i} F(x_j^t;\theta(t))\kr}_2 \\ 
			&\quad\ + \frac{2\lambda}{b_{\partial \Omega}} \sum_{j=1}^{b_{\partial \Omega}} \betrag{F(y_j^t; \theta(t))} \norm{\nabla_{\theta_i} F(y_j^t; \theta(t))}_2  \\
			&\leq \frac{c_4 + c_5}{\sqrt{m}}. 
			\end{split}
		\end{align}
		Thus \eqref{eq:bound norm of loss function} implies
		\begin{equation}\label{eq:bound on *}
			(*) = \sum_{i=1}^m \norm{\nabla_{\theta_i} E_{S_t}(\theta(t))}_2^2 \leq (c_4 + c_5)^2.
		\end{equation} 

\noindent\textbf{Lower Bounding $(**)$:} To bound $(**)$ from below, we observe
		\begin{align*}
			&\quad\ \sum_{i=1}^m \nabla_{\theta_i} E_{S_t}(\theta(t))^T \el \theta(t)-\overline{\theta}\er \notag \\
			&= \frac{2}{b_\Omega}\sum_{j=1}^{b_\Omega} \underbrace{\kl \Delta F(x_j^t;\theta(t)) + f(x_j^t)\kr \sum_{i=1}^m \kl \Delta \nabla_{\theta_i} F(x_j^t;\theta(t))\kr^T \el \theta_i(t) - \overline{\theta_i}\er}_{\eqqcolon (\text{II})} \notag \\
			&\quad\ + \frac{2\lambda}{b_{\partial \Omega}} \sum_{j=1}^{b_{\partial \Omega}} \underbrace{F(y_j^t;\theta(t)) \sum_{i=1}^m \nabla_{\theta_i}^T F(y_j^t;\theta(t))\el \theta_i(t) - \overline{\theta_i}\er}_{\eqqcolon (\text{I})}.
		\end{align*}
		We emphasize that it is necessary to obtain a lower bound in which the empirical loss function appears again. Starting with (I), there holds
		\begin{align*}\label{eq:decomposition of I}
			&\quad\, \sum_{i=1}^m \nabla_{\theta_i}^T F(y_j^t;\theta(t))\el \theta_i(t) - \overline{\theta_i}\er\notag \\
			&= \sum_{i=1}^m \nabla_{\theta_i}^T F\kl y_j^t;\theta(t)\kr \el \theta_i(t) - \theta_i(0)\er - \sum_{i=1}^m \nabla_{\theta_i}^T F\kl y_j^t; \theta(t)\kr \el \overline{\theta_i} - \theta_i(0)\er \notag \\
			&= \sum_{i=1}^m \nabla_{\theta_i}^T F\kl y_j^t; \theta(0)\kr \el \theta_i(t) - \theta_i(0)\er - \sum_{i=1}^m \nabla_{\theta_i}^T F\kl y_j^t; \theta(0)\kr \el \overline{\theta_i} - \theta_i(0)\er \notag\\
			&\quad\, + \sum_{i=1}^m \nabla_{\theta_i}^T F\kl y_j^t; \theta(0)\kr \el \overline{\theta_i} - \theta_i(0)\er - \sum_{i=1}^m \nabla_{\theta_i}^T F\kl y_j^t; \theta(t)\kr \el \overline{\theta_i} - \theta_i(0)\er \notag\\
			&\quad\, + \sum_{i=1}^m \nabla_{\theta_i}^T F\kl y_j^t;\theta(t)\kr \el \theta_i(t) - \theta_i(0)\er - \sum_{i=1}^m \nabla_{\theta_i}^T F\kl y_j^t; \theta(0)\kr \el \theta_i(t) - \theta_i(0)\er.
		\end{align*}
		Using that, we obtain
		\begin{align*}
			&\quad\, \frac{2\lambda}{b_{\partial \Omega}} \sum_{j=1}^{b_{\partial \Omega}} F(y_j^t;\theta(t)) \sum_{i=1}^m \nabla_{\theta_i}^T F(y_j^t;\theta(t))\el \theta_i(t) - \overline{\theta_i}\er\\
			&\overset{\mathclap{\text{\cref{lemma:linearization error,lemma:approximation error,lemma:technical inequalities}}}}{\geq}\qquad\  \frac{2\lambda }{b_{\partial \Omega}} \sum_{j=1}^{b_{\partial \Omega}} F\kl y_j^t; \theta(t)\kr^2 - \frac{c_6\sqrt{\log \kl \frac{b_{\partial \Omega}}{\delta}\kr }}{\sqrt{m}} - c_7\norm{\eps_{p,\xi}}_{\infty(\partial \Omega)}
		\end{align*}
		with probability at least $1-\delta$. Here we applied \cref{lemma:approximation error} with $\tilde{\delta} = \frac{\delta}{b_{\partial \Omega}}$. Similarly, for (II), there holds
		\begin{align*}
			&\quad\, \sum_{i=1}^m \nabla_{\theta_i}^T \Delta F(y_j^t;\theta(t))\el \theta_i(t) - \overline{\theta_i}\er\notag \\
			&= \sum_{i=1}^m \nabla_{\theta_i}^T \Delta F\kl y_j^t;\theta(t)\kr \el \theta_i(t) - \theta_i(0)\er - \sum_{i=1}^m \nabla_{\theta_i}^T \Delta F\kl y_j^t; \theta(t)\kr \el \overline{\theta_i} - \theta_i(0)\er \notag \\
			&= \sum_{i=1}^m \nabla_{\theta_i}^T\Delta F\kl y_j^t; \theta(0)\kr \el \theta_i(t) - \theta_i(0)\er - \sum_{i=1}^m \nabla_{\theta_i}^T \Delta F\kl y_j^t; \theta(0)\kr \el \overline{\theta_i} - \theta_i(0)\er \notag\\
			&\quad\, + \sum_{i=1}^m \nabla_{\theta_i}^T \Delta F\kl y_j^t; \theta(0)\kr \el \overline{\theta_i} - \theta_i(0)\er - \sum_{i=1}^m \nabla_{\theta_i}^T\Delta F\kl y_j^t; \theta(t)\kr \el \overline{\theta_i} - \theta_i(0)\er \notag\\
			&\quad\, + \sum_{i=1}^m \nabla_{\theta_i}^T \Delta F\kl y_j^t;\theta(t)\kr \el \theta_i(t) - \theta_i(0)\er - \sum_{i=1}^m \nabla_{\theta_i}^T \Delta F\kl y_j^t; \theta(0)\kr \el \theta_i(t) - \theta_i(0)\er.
		\end{align*}
		Here we applied \cref{lemma:linearization error,lemma:approximation error,lemma:technical inequalities} with $\tilde{\delta} = \frac{\delta}{b_{\Omega}}$. Using that, we obtain
		\begin{align*}
			&\quad\, \frac{2}{b_\Omega}\sum_{j=1}^{b_\Omega} \kl \Delta F(x_j^t;\theta(t)) + f(x_j^t)\kr \sum_{i=1}^m \kl \Delta \nabla_{\theta_i} F(x_j^t;\theta(t))\kr^T \el \theta_i(t) - \overline{\theta_i}\er \\
			&\overset{\mathclap{\text{\cref{lemma:linearization error,lemma:approximation error,lemma:technical inequalities}}}}{\geq}\qquad \frac{2}{b_\Omega} \sum_{j=1}^{b_\Omega} \kl \Delta F\kl x_j^t;\theta(t)\kr + f(x_j^t)\kr^2 - \frac{c_8\sqrt{\log \kl \frac{b_\Omega}{\delta}\kr }}{\sqrt{m}} - c_9\norm{\Delta \eps_{p,\xi}}_{\infty(\Omega)}
		\end{align*}
		with probability at least $1-\delta$.
        
        \noindent\textbf{Combining upper and lower bound:} In total, this leads to
		\begin{align*}
			&\quad\, \sum_{i=1}^m \nabla_{\theta_i} E_{S_t}(\theta(t))^T \el \theta(t)-\overline{\theta}\er \\
            &\geq 2E_{S_t}\kl \theta(t)\kr -\frac{(c_6 + c_8)\sqrt{\log \kl \frac{\max \{b_\Omega, b_{\partial \Omega}\}}{\delta}\kr }}{\sqrt{m}} - c_{10} \norm{\eps_{p,\xi}}_{W^{0,2;\infty}}
		\end{align*}
		with probability at least $1-2\delta$. From \eqref{eq:lyapunov inequality}, we obtain
		\begin{align*}
			\mathcal{L}(W(t+1)) &\leq \underbrace{\sum_{i=1}^m \norm{\theta_i(t) - \overline{\theta_i}}_2^2}_{=\mathcal{L}(\theta(t))} - 4\eta E_{S_t}(\theta(t)) + \frac{2\eta(c_6+c_8)\sqrt{\log \kl \frac{\max \{b_\Omega, b_{\partial \Omega}\}}{\delta}\kr }}{\sqrt{m}} \\
            &\quad + \eta^2 (c_4 + c_5)^2 -4\eta c_{10}\norm{\eps_{p,\xi}}_{W^{0,2;\infty}}
		\end{align*}
		with probability $1-2\delta$. Now summing this inequality from $0$ to $T-1$ and dividing by $4\eta T$ implies
		\begin{align*}
            &\quad\, \frac{1}{T}\sum_{t=0}^{T-1} E_{S_t}(\theta(t)) \\
            &\leq \frac{\mathcal{L}(\theta(0))}{4\eta T} + \frac{(c_6+c_8)\sqrt{\log \kl \frac{\max \{b_\Omega, b_{\partial \Omega}\}}{\delta}\kr }}{2\sqrt{m}} + \frac{\eta\kl c_4 + c_5\kr^2}{4} + \norm{\eps_{p,\xi}}_{W^{0,2;\infty}}\\
			&\leq \frac{\alpha^2}{4\sqrt{T}} + \frac{(c_6 + c_8)\sqrt{\log \kl \frac{\max \{b_\Omega, b_{\partial \Omega}\}}{\delta}\kr }}{2\sqrt{m}} + \frac{(c_4 + c_5)^2}{4\sqrt{T}} +
   \eps_p + \xi
   .
		\end{align*}
 with probability 
  $1-2\delta$. Since $\xi$ was arbitrary, the claim follows by sending $\xi \to 0$.
\end{proof}

\subsection{A Computational Example}
In the last section we have formulated and proven \cref{thm:optimization bound}, which is a high-probability guarantee (with respect to the random initialization) showing an $\mathcal O(\frac1{\sqrt{T}} +\frac1{\sqrt{m}} + \epsilon_p)$ bound on the empirical risk. 
Here $\epsilon_p$ resembles the approximation error of the true solution of the PDE. 
To compare this to practical applications, we conduct an experiment on a toy problem.
To this end, we choose $p$ large such that we can expect $\epsilon_p$ to be small and set $T=m$ and train shallow networks of different widths $m$ for $T=m$ iterations. 
We expect an (almost) $\mathcal O(\frac{1}{\sqrt{T}})$ decay of the empirical loss. 
To this end, we consider the two-dimensional Poisson equation 
\[
\begin{aligned}
            -\Delta u&=1 && \text{in }\Omega \subset \R^2 \\
            u&=0 && \text{on }\partial\Omega
\end{aligned},
\]
on the disc 
\[
     \Omega = B_2\kl \begin{pmatrix}
    0 \\ 2
    \end{pmatrix}, 
    1\kr.
\]
\begin{figure}[h]
    \centering
    \hspace{.3cm}
    \begin{tikzpicture}
    \begin{loglogaxis}[
        width=10cm,
        height=7cm,
        xlabel={$m = T$},
        ylabel={$\displaystyle \text{90\% percentile of } \frac{1}{T} \sum_{t=0}^{T-1} E_{S_t}(\theta(t))$},
        xlabel style={font=\small},
        ylabel style={font=\small},
        legend style={font=\small, at={(0.99,0.99)}, anchor=north east},
        log ticks with fixed point, 
        mark size=2.5pt,
        grid=both,
        xtick={5,10,20,40},
        ytick={0.5,1,2,4},
        xmin = 3,
        xmax = 35,
        ymin = 0.9,
        ymax = 6,
    ]
    
    \addplot+[only marks, mark=*, color=UltraMarine] 
        coordinates {(4,4.575) (6,3.250) (10,2.136) (14,1.716) (20,1.424) (24,1.293)  (30,1.159)};
    
    \addlegendentry{90\% percentile bound}
    
    \addplot+[domain=3:40, samples=200, mark=none, very thick, color=black]{6.33 * x^(-0.5)};
    
    \addlegendentry{Slope $-0.5$}
    
    \end{loglogaxis}
    \end{tikzpicture}
    \caption{Shown are the $90\%$ percentile of the empirical loss compared to different network widths $m$ trained with projected stochastic gradient descent for $T=m$ steps as well as the $\mathcal O\bigl(\frac1{\sqrt{T}}\bigr) = \mathcal O\bigl(\frac1{\sqrt{m}}\bigr)$ rate, which is guaranteed by \Cref{thm:optimization bound} up to an approximation error.}
    \label{fig:experiment uniform data points}
\end{figure}
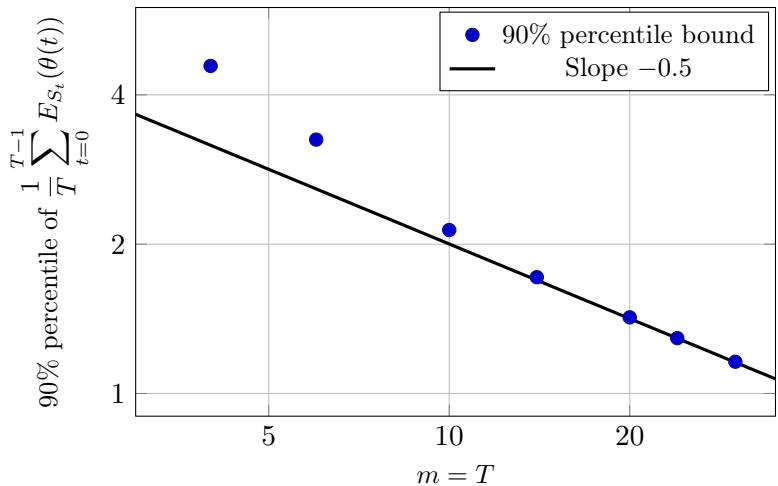
We choose $\sigma(x) = \tanh(x)$ as an activation function, which satisfies our regularity \Cref{ass:smoothness}. 
In the training processes, we choose SGD with step size $\eta = \frac{1}{\sqrt{T}}$, corresponding to batch sizes $b_\Omega=1, b_{\partial \Omega} = 1$ and choose a projection radius of $p = 40$. 
For the sampling of the integration points, we take independent samples from the uniform distributions on $\Omega$ and $\partial\Omega$, respectively. 
As network sizes, we consider $m=4,6,10,14,20,24,30$. 
As \Cref{thm:optimization bound} gives a high-probability guarantee, we report the $90\%$ percentile of time average $\frac1T\sum_{t=0}^{T-1} E_{S_t}(\theta(t))$ of the empirical loss in \Cref{fig:experiment uniform data points} in a log-log plot, where we also indicate our idealized theoretical $\mathcal O\bigl(\frac1{\sqrt{T}}\bigr)$ guarantee. 
To approximate the percentile, we use $10^6$ runs for $m=4,6$ and $10^5$ for the other network sizes. 
We observe that the percentiles approximately nestle around the $\mathcal O\bigl( \frac1{\sqrt{T}}\bigr) = \mathcal O\bigl( \frac1{\sqrt{m}}\bigr)$ rate that we obtain from \Cref{thm:optimization bound} up to an approximation error.

\section{Overall Error Analysis} 
In this section, we combine \cref{thm:optimization bound} together with a generalization estimate into a total error bound.
From now on, we assume that the sampling of data points follows a uniform distribution on $\Omega$ as well as on $\partial \Omega$. We make this particular choice of the sampling distribution because it is widely used in practice. Nonetheless, we emphasize that one can analyze the generalization properties of other distributions as well, since our optimization result \cref{thm:optimization bound} holds for any distribution. In the end, we will get the following result on the overall error.

\begin{thm}[Complete error analysis]\label{thm:overall-error}
    For $T\in\N$ we denote the optimization step with the best empirical loss $\hat{t} \coloneqq \argmin_{t=0, \dots, T} E_{S_t}(\theta(t))$ and set $\hat{\theta}_T\coloneqq \theta(\hat{t})$.
    Given the presumption that we choose iid uniformly distributed data points $S_t$ and that we consider the setting of \cref{thm:optimization bound}, there holds
\begin{equation}
	\norm{
 F(\cdot,{\hat\theta_T}) 
 - u^*}_{H^{\frac{1}{2}}(\Omega)}^2 \precsim \kl \frac{1}{\sqrt{T}} + \frac{\sqrt{\log \kl \frac{\max \{b_\Omega, b_{\partial \Omega}\}}{\delta}\kr }}{\sqrt{m}}+ \eps_{p} + \sqrt{\frac{\log \frac{1}{\delta}}{\min \{b_\Omega, b_{\partial \Omega}\}}}\kr^{\frac{1}{2}}
\end{equation}
with probability at least $1-3\delta$.
\end{thm}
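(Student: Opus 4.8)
The strategy is to route everything through the population loss $E(\hat\theta_T)$ at the best iterate. By the regularity estimate $\lVert u^*-F(\cdot;\theta)\rVert_{H^{1/2}(\Omega)}\le c_{\textup{reg}}\sqrt{E(\theta)}$ recalled in the introduction, it suffices to bound $E(\hat\theta_T)$ by the sum $S$ of the four terms on the right-hand side of the claim; the estimate then converts this into the asserted $H^{1/2}$-bound. To bound $E(\hat\theta_T)$ I would split
\[
E(\hat\theta_T)=E_{S_{\hat t}}(\hat\theta_T)+\bigl(E(\hat\theta_T)-E_{S_{\hat t}}(\hat\theta_T)\bigr)
\]
into an optimization part and a generalization gap, and treat the two separately.

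The optimization part follows at once from \cref{thm:optimization bound} once one observes that, by definition, $\hat t$ minimizes the empirical loss along the trajectory, so that
\[
E_{S_{\hat t}}(\hat\theta_T)\le\frac1T\sum_{t=0}^{T-1}E_{S_t}(\theta(t)),
\]
and the right-hand side is exactly the quantity bounded in \cref{thm:optimization bound} (which makes no assumption on the integration sets). Hence, with probability at least $1-2\delta$ over the initialization, $E_{S_{\hat t}}(\hat\theta_T)$ is controlled by $\tfrac{C_1}{\sqrt T}+\tfrac{C_2\sqrt{\log(\max\{b_\Omega,b_{\partial\Omega}\}/\delta)}}{\sqrt m}+C_3\eps_p$, i.e.\ by the first three terms of $S$.

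For the generalization gap I would invoke \cref{thm:generalization bounds}. Its proof rests on facts all already available here: every iterate of \cref{alg:pinn optimization}, in particular $\hat\theta_T$, lies in the fixed product of balls $\mathcal B\coloneqq B_{2,\infty}(\theta(0),p/\sqrt m)$ irrespective of how the data fell; on $\mathcal B$ the maps $x\mapsto F(x;\theta)$ and $x\mapsto\Delta F(x;\theta)$ are uniformly bounded by constants depending polynomially on $a,d,p,c_\Omega,\sigma_1,\sigma_2,\sigma_3$ — precisely the bounds $|F|\le\sigma_1 c_\Omega p$ and $|\Delta F|\le c_3$ established inside the proof of \cref{thm:optimization bound} — so that, since $f\in C^\beta(\Omega)$ is bounded on the bounded domain $\Omega$, the maps $u\mapsto(u+f)^2$ and $u\mapsto u^2$ are Lipschitz on the relevant ranges, and the Ledoux--Talagrand contraction principle reduces the Rademacher complexity of the loss class over $\mathcal B$ to those of $\{F(\cdot;\theta):\theta\in\mathcal B\}$ and of the Laplacian class $\{x\mapsto\sum_i c_i\lVert\theta_i\rVert_2^2\sigma''(\theta_i^Tx):\theta\in\mathcal B\}$; and the latter two are $O(1/\sqrt{b_\Omega})$ and $O(1/\sqrt{b_{\partial\Omega}})$. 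A standard uniform-convergence argument over $\mathcal B$ together with a bounded-differences inequality — the uniformity over $\mathcal B$ absorbing the data-dependence of $\hat\theta_T$ — then gives, with probability at least $1-\delta$,
\[
E(\hat\theta_T)-E_{S_{\hat t}}(\hat\theta_T)\precsim\sqrt{\frac{\log(1/\delta)}{\min\{b_\Omega,b_{\partial\Omega}\}}},
\]
the Rademacher contributions being of the same order and hence absorbed.

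Union-bounding the two events (total failure probability at most $3\delta$) gives $E(\hat\theta_T)\precsim S$, and the regularity estimate yields the claim. I expect the genuine obstacle to be \cref{thm:generalization bounds} itself — above all the Rademacher complexity of the Laplacian class, which involves second derivatives of $\sigma$ and the extra weight factor $\lVert\theta_i\rVert_2^2$, so that the usual path-norm-type arguments have to be adapted to the $\lVert\cdot\rVert_{2,\infty}$-ball used in the projection step. Granting that and the boundedness facts borrowed from the proof of \cref{thm:optimization bound}, the proof of \cref{thm:overall-error} itself is the bookkeeping sketched above, whose one essential idea is that optimality of $\hat t$ lets the time-average guarantee of \cref{thm:optimization bound} control the empirical loss at the best iterate.
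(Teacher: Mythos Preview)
Your proposal is correct and follows essentially the same route as the paper: bound $E(\hat\theta_T)$ by splitting into the empirical loss at the best iterate (controlled via $\min\le$ average and \cref{thm:optimization bound}) plus the generalization gap (controlled via \cref{thm:generalization bounds}), then apply the regularity estimate and union bound. One small inaccuracy in your sketch of the Rademacher argument: the complexities of $\mathcal F_m^p$ and $\Delta(\mathcal F_m^p)$ carry an additional $1/\sqrt m$ contribution coming from the linearization error (see Proposition~12 and \cref{lemma:rademacher complexity nn}), not just $O(1/\sqrt{b})$; but this extra term is harmlessly absorbed into the $1/\sqrt m$ term already present from the optimization bound, so the final statement is unaffected.
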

To prove \cref{thm:overall-error}, we are particularly interested in the best-iterate generalization error. To that end, we will prove a bound using Rademacher complexity estimates and a version of MCDiarmid's inequality. New in that regard is the necessity to estimate the Rademacher complexity of the Laplacian of the function class. Hence, we now derive a bound for it before stating the generalization theorem and giving the proof of \cref{thm:overall-error} in the end of this section. For a fixed $m\in \N_e$, we set
\begin{align}
    \mathcal{F}_m^p \coloneqq \left\{
        x\mapsto F(x;\theta) \mid \theta\in B_{2,\infty}\kl \theta(0), \frac{p}{\sqrt{m}}\kr
    \right\}
\end{align}
and $\Delta (\mathcal{F}_m^p) = \{\Delta u \mid u\in \mathcal{F}_m^p\}$. 
\begin{prop}[Estimate on Rademacher complexity]
    There holds
    \begin{align}
        \rademp_{\min\{b_\Omega, b_{\partial \Omega}\}} (\Delta(\mathcal{F}_m^p)) \precsim \frac{1}{\sqrt{m}} + \frac{1}{\sqrt{\min\{b_\Omega, b_{\partial \Omega}\}}}. 
    \end{align}
\end{prop}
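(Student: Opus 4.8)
The plan is to reduce $\rademp_n(\Delta(\mathcal F_m^p))$, where $n \coloneqq \min\{b_\Omega, b_{\partial\Omega}\}$, to the empirical Rademacher complexity of a linear random-feature class plus a uniformly controllable remainder, using the linearization bound from \cref{lemma:linearization error}. Since the symmetric initialization \eqref{eq: ntk initialization} gives $\Delta F(\cdot;\theta(0)) \equiv 0$, every $g = \Delta F(\cdot;\theta) \in \Delta(\mathcal F_m^p)$ with $\theta \in B_{2,\infty}(\theta(0), p/\sqrt m)$ can be split as
\begin{align*}
  \Delta F(x;\theta) = \underbrace{\sum_{i=1}^m \nabla_{\theta_i}^T \Delta F(x;\theta(0))\,\bigl[\theta_i - \theta_i(0)\bigr]}_{=:\, g_{\mathrm{lin}}(x;\theta)} + R(x;\theta), \qquad \abs{R(x;\theta)} \le C\,\norm{x}_2^2\, p^2 / \sqrt m,
\end{align*}
where the bound on $R$ is exactly \eqref{eq:linearization error laplace}. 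Because a supremum of a sum is dominated by the sum of suprema, $\rademp_n(\Delta(\mathcal F_m^p))$ is at most the empirical Rademacher complexity of the linear class $\{x \mapsto g_{\mathrm{lin}}(x;\theta)\}$ plus that of the remainder class $\{x \mapsto R(x;\theta)\}$, both over sample points $z_1,\dots,z_n \in \Omega$, for which $\norm{z_k}_2 \le c_\Omega$. The remainder part is immediate: $\E_\varepsilon \sup_\theta \frac1n \sum_{k=1}^n \varepsilon_k R(z_k;\theta) \le \sup_\theta \frac1n \sum_{k=1}^n \abs{R(z_k;\theta)} \le C\, c_\Omega^2\, p^2 / \sqrt m$, contributing the $\mathcal O(1/\sqrt m)$ term.

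For the linear part, set $\phi_i(x) \coloneqq \nabla_{\theta_i}\Delta F(x;\theta(0)) = c_i\bigl(2\,\theta_i(0)\,\sigma''(\theta_i(0)^T x) + \norm{\theta_i(0)}_2^2\,\sigma'''(\theta_i(0)^T x)\,x\bigr)$. From $\abs{c_i} \le 1/\sqrt m$, the compact support $\norm{\theta_i(0)}_\infty \le a$ of the truncated Gaussian, $\norm{x}_2 \le c_\Omega$, and \cref{ass:smoothness}, one gets $\norm{\phi_i(x)}_2 \le B/\sqrt m$ with $B = B(a,d,c_\Omega,\sigma_2,\sigma_3)$. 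Stacking $\Phi(x) \coloneqq (\phi_1(x),\dots,\phi_m(x)) \in \R^{md}$ and $W \coloneqq (\theta_1 - \theta_1(0),\dots,\theta_m - \theta_m(0)) \in \R^{md}$ gives $g_{\mathrm{lin}}(x;\theta) = \langle W, \Phi(x)\rangle$ with $\norm{W}_2 \le \sqrt m\cdot(p/\sqrt m) = p$ and $\norm{\Phi(x)}_2 \le \sqrt m\cdot(B/\sqrt m) = B$. Using the dual characterization $\sup_{\norm{W}_2 \le p}\langle W, v\rangle = p\norm{v}_2$, Jensen's inequality, and $\E_\varepsilon \norm{\sum_k \varepsilon_k v_k}_2^2 = \sum_k \norm{v_k}_2^2$, one obtains
\begin{align*}
  \E_\varepsilon \sup_{\norm{W}_2 \le p} \frac1n \Bigl\langle W, \sum_{k=1}^n \varepsilon_k \Phi(z_k)\Bigr\rangle = \frac pn\, \E_\varepsilon \Bigl\|\sum_{k=1}^n \varepsilon_k \Phi(z_k)\Bigr\|_2 \le \frac pn \Bigl(\sum_{k=1}^n \norm{\Phi(z_k)}_2^2\Bigr)^{1/2} \le \frac{pB}{\sqrt n}.
\end{align*}
Summing the two contributions gives $\rademp_n(\Delta(\mathcal F_m^p)) \le pB/\sqrt n + C c_\Omega^2 p^2/\sqrt m \precsim 1/\sqrt m + 1/\sqrt n$, which is the claim.

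The only point that genuinely requires care is the exact cancellation of the width $m$: the projection ball has radius $p/\sqrt m$ per neuron and there are $m$ of them, while each feature $\phi_i$ has norm $\mathcal O(1/\sqrt m)$ precisely because of the $1/\sqrt m$-scaled Rademacher output weights, so $\norm{W}_2\,\norm{\Phi(x)}_2 = \mathcal O(1)$ uniformly in $m$; the same scaling also makes the linearization remainder $R$ an $\mathcal O(1/\sqrt m)$ quantity rather than $\mathcal O(1)$. The remaining estimates — the pointwise bounds on $\phi_i$ and on $R$ — are routine consequences of \cref{ass:smoothness} and the compact support of the initialization, and the argument is insensitive, up to a universal constant absorbed by $\precsim$, to the precise normalization chosen in the definition of $\rademp_n$.
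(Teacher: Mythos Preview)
Your proof is correct and follows essentially the same approach as the paper: split $\Delta F(\cdot;\theta)$ into its linearization at $\theta(0)$ plus the remainder controlled by \cref{lemma:linearization error}, then bound the Rademacher complexity of the linear random-feature class via Cauchy--Schwarz and the orthogonality of the Rademacher signs. Your write-up is in fact somewhat more explicit than the paper's about the feature-map bound $\lVert\phi_i(x)\rVert_2 \le B/\sqrt m$ and the width-cancellation mechanism, but the argument is the same.
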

\begin{proof}
Our strategy is the same as in \cref{lemma:rademacher complexity nn}: We decompose the function class $\mathcal{F}_m^p$ into its linearization and a remainder and bound the Rademacher complexity of both parts individually. In the proof of \cref{thm:optimization bound} we have shown that $\lVert{\nabla_{\theta_i}\Delta F(x;\theta(t))}\rVert_2 \precsim \frac{1}{\sqrt{m}}$. 
We define $B\coloneqq B_{2,\infty} ( \theta(0), \frac{p}{\sqrt{m}})$, $n \coloneqq \min\{b_\Omega, b_{\partial \Omega}\}$ and compute
\begin{align*}
    &\quad\, n \rademp_n(\Delta(\mathcal{F}_m^p)) \\
    &= \E_\varepsilon \el \sup_{\theta\in B} \sum_{j=1}^n \varepsilon_j \Delta F(x_j; \theta, c)\er \\
    &= \E_\varepsilon \el \sup_{\theta\in B} \sum_{j=1}^n \varepsilon_j \el \Delta F(x_j; \theta, c) - \Delta F(x_j; \theta(0), c) \er\er \\
    &\precsim \frac{n}{\sqrt{m}} + \E_\varepsilon \el \sup_{\theta\in B} \sum_{j=1}^n \varepsilon_j \nabla^T \Delta F(x_j; \theta(0), c)\el \theta- \theta(0)\er\er \\
    &\precsim \frac{n}{\sqrt{m}} + \E_\varepsilon \el \sup_{w\in B} \underbrace{\norm{\theta-\theta(0)}_2}_{\leq p} \norm{\sum_{j=1}^n \varepsilon_j \nabla^T \Delta F(x_j; \theta(0), c)}_2 \er\\
    &\precsim \frac{n}{\sqrt{m}} + \E_\varepsilon \el \sup_{\theta\in B} \underbrace{\norm{\theta-\theta(0)}_2}_{\leq p} \norm{\sum_{j=1}^n \varepsilon_j \sum_{i=1}^m \nabla_{\theta_i} \Delta F(x_j; \theta(0), c)}_2 \er\\
    &\precsim \frac{n}{\sqrt{m}} + p \E_\varepsilon \el \norm{\sum_{j=1}^n \varepsilon_j \sum_{i=1}^m \nabla_{\theta_i} \Delta F(x_j; \theta(0), c)}_2 \er\\
    &\precsim \frac{n}{\sqrt{m}} + p \sqrt{\sum_{j=1}^n \norm{\sum_{i=1}^m \nabla_{\theta_i} \Delta F(x_j; \theta(0), c)}_2^2} \\
    &\precsim \frac{n}{\sqrt{m}} +  p\sqrt{n}.
\end{align*}
\end{proof}
Now we are ready to state and prove our result on a particular generalization error. In our analysis, this error measures how far away the best empirical loss function measurement is from the exact loss function at the best empirical loss weights. 
\begin{prop}[Generalization Bound under Uniform Sampling]\label{thm:generalization bounds}Let $\hat{t} \coloneqq \argmin_{t} E_{S_t}(\theta(t))$ denote the optimization step with the best empirical loss. Given the presumption that we choose iid uniformly distributed data points, there exist constants $C_4, C_5>0$ such that for every $\delta > 0$ there holds
    \begin{align}
        \betrag{E_{S_{\hat{t}}}(\theta(\hat{t})) - E(\theta(\hat{t}))} &\leq \frac{C_4}{\sqrt{m}} + C_5\sqrt{\frac{\log \frac{1}{\delta}}{\min\{b_\Omega, b_{\partial \Omega}\}}}
    \end{align}
    with probability of at least $1-\delta$.
\end{prop}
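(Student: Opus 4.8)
The plan is to reduce the claim to a uniform deviation estimate of the empirical loss over the admissible parameter set and then feed in the Rademacher complexity bounds established above. The structural point is that the projection step in \cref{alg:pinn optimization} forces every iterate into $B\coloneqq B_{2,\infty}(\theta(0),\tfrac{p}{\sqrt m})$, so that $F(\cdot;\theta(t))\in\mathcal F_m^p$ and $\Delta F(\cdot;\theta(t))\in\Delta(\mathcal F_m^p)$ for all $t$; in particular $\betrag{E_{S_{\hat t}}(\theta(\hat t))-E(\theta(\hat t))}\le\max_{t=0,\dots,T}\sup_{\theta\in B}\betrag{E_{S_t}(\theta)-E(\theta)}$. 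Hence it is enough to prove a uniform deviation bound of the claimed order for a single iid uniform sample $S$ (of sizes $b_\Omega,b_{\partial\Omega}$) with probability $1-\delta$, and then union bound over $t=0,\dots,T$, absorbing the harmless $\log(T+1)$ into the constants. First I would split $E_S=E^{\mathrm{int}}_S+\lambda E^{\partial}_S$, with $E^{\mathrm{int}}_S$ the empirical average of $(\Delta F(x_j;\theta)+f(x_j))^2$ over the interior points and $E^{\partial}_S$ the empirical average of $F(y_j;\theta)^2$ over the boundary points, write $E=E^{\mathrm{int}}+\lambda E^{\partial}$ for the corresponding continuous losses, and bound $\sup_{\theta\in B}\betrag{E^{\mathrm{int}}_S-E^{\mathrm{int}}}$ and $\sup_{\theta\in B}\betrag{E^{\partial}_S-E^{\partial}}$ separately.

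For each of the two parts I would combine a bounded-differences (McDiarmid) argument with symmetrization. From the proof of \cref{thm:optimization bound} we already have the uniform bounds $\betrag{\Delta F(x;\theta)}\le c_3$ on $\overline{\Omega}$ and $\betrag{F(y;\theta)}\le\sigma_1 c_\Omega p$ on $\partial\Omega$ for $\theta\in B$; together with $\norm{f}_{\infty(\Omega)}<\infty$, every summand of the empirical losses is bounded, so replacing a single interior (resp.\ boundary) point perturbs $\sup_{\theta\in B}\betrag{E^{\mathrm{int}}_S-E^{\mathrm{int}}}$ by at most $(c_3+\norm{f}_{\infty(\Omega)})^2/b_\Omega$ (resp.\ $\sup_{\theta\in B}\betrag{E^{\partial}_S-E^{\partial}}$ by at most $(\sigma_1 c_\Omega p)^2/b_{\partial\Omega}$). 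McDiarmid then gives, with probability $1-\delta/2$ for each part, that the supremum is at most its expectation plus a term of order $\sqrt{\log(1/\delta)/b_\Omega}$, resp.\ $\sqrt{\log(1/\delta)/b_{\partial\Omega}}$, and a standard symmetrization step bounds the expectation by a constant times $\mathbb E[\rademp_{b_\Omega}(\mathcal G^{\mathrm{int}})]$, resp.\ $\lambda\,\mathbb E[\rademp_{b_{\partial\Omega}}(\mathcal G^{\partial})]$, where $\mathcal G^{\mathrm{int}}\coloneqq\{x\mapsto(\Delta F(x;\theta)+f(x))^2\mid\theta\in B\}$ and $\mathcal G^{\partial}\coloneqq\{y\mapsto F(y;\theta)^2\mid\theta\in B\}$.

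The last ingredient is to control $\rademp(\mathcal G^{\mathrm{int}})$ and $\rademp(\mathcal G^{\partial})$ through the Rademacher complexities of $\Delta(\mathcal F_m^p)$ and $\mathcal F_m^p$. For the boundary part, $z\mapsto z^2$ is Lipschitz on the range $[-\sigma_1 c_\Omega p,\sigma_1 c_\Omega p]$ and vanishes at $0$, so the contraction lemma yields $\rademp_{b_{\partial\Omega}}(\mathcal G^{\partial})\precsim\rademp_{b_{\partial\Omega}}(\mathcal F_m^p)\precsim\tfrac1{\sqrt m}+\tfrac1{\sqrt{b_{\partial\Omega}}}$ by \cref{lemma:rademacher complexity nn}. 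For the interior part I would write $(\Delta F(x;\theta)+f(x))^2=\psi_x(\Delta F(x;\theta))+f(x)^2$ with $\psi_x(z)\coloneqq z^2+2f(x)z$, so that $\psi_x(0)=0$ and $\psi_x$ is $2(c_3+\norm{f}_{\infty(\Omega)})$-Lipschitz on $[-c_3,c_3]$; since the $\theta$-independent term $f(x)^2$ leaves the Rademacher complexity unchanged, the contraction lemma gives $\rademp_{b_\Omega}(\mathcal G^{\mathrm{int}})\precsim\rademp_{b_\Omega}(\Delta(\mathcal F_m^p))\precsim\tfrac1{\sqrt m}+\tfrac1{\sqrt{b_\Omega}}$ by the preceding proposition. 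Inserting these into the two McDiarmid bounds, union bounding the interior and boundary events, absorbing the lower-order $1/\sqrt{b_\Omega}$ and $1/\sqrt{b_{\partial\Omega}}$ terms into the respective concentration terms, and using that each is at most $1/\sqrt{\min\{b_\Omega,b_{\partial\Omega}\}}$, I obtain the stated bound with constants $C_4,C_5$ that are polynomial in $a,d,\lambda,c_\Omega,p,\sigma_1,\dots,\sigma_4$ and $\norm{f}_{\infty(\Omega)}$.

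I expect the main obstacle to be conceptual rather than computational: since $\hat t$ is selected from the samples, $\theta(\hat t)$ need not be independent of $S_{\hat t}$, so a pointwise concentration bound does not suffice and one genuinely needs the uniform deviation estimate over all of $B$ — this is precisely where the projection step of \cref{alg:pinn optimization} is indispensable, since it keeps every iterate inside $B$ and lets us recycle the Rademacher estimates for $\mathcal F_m^p$ and $\Delta(\mathcal F_m^p)$. The remaining work is bookkeeping: carrying over the $L^\infty$-bounds on $F$ and $\Delta F$ from the proof of \cref{thm:optimization bound} so that McDiarmid and the contraction lemma are applicable, and centering $\psi_x$ to absorb the inhomogeneity $f$.
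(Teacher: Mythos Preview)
Your approach is essentially the paper's: a uniform deviation bound via McDiarmid plus symmetrization, then Talagrand contraction to reduce the squared losses to $\rademp(\mathcal F_m^p)$ and $\rademp(\Delta(\mathcal F_m^p))$, and finally the Rademacher estimates already proved. Your treatment is in fact more explicit than the paper's on two points: you split interior and boundary and you center the interior loss via $\psi_x(z)=z^2+2f(x)z$ before applying contraction, whereas the paper just invokes ``Talagrand's contraction principle'' for $x\mapsto x^2$ in one line.

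The one substantive difference is your union bound over $t=0,\dots,T$. The paper simply applies the uniform deviation inequality at $(\hat t,\theta(\hat t))$ as if $S_{\hat t}$ were a fixed iid sample, without discussing the fact that $\hat t$ is selected from the data; you correctly flag this and pay a $\log(T{+}1)$ factor. Be aware, however, that you cannot ``absorb the harmless $\log(T{+}1)$ into the constants'' if $C_4,C_5$ are meant to be $T$-independent---and they must be, since the generalization term in \cref{thm:overall-error} carries no $\log T$. So either you accept $C_5=C_5(T)$ (which proves a slightly weaker statement than the one used downstream), or you follow the paper and apply the uniform bound directly at $\hat t$; the paper's write-up does the latter but does not justify why the data-dependent selection of $S_{\hat t}$ is harmless. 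Everything else in your plan is correct and matches the paper.
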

In contrast to the classical supervised learning setting, the Laplacian is part of the loss function. Using an estimation on the Rademacher complexity of the Laplacian of the neural network, we can conclude a generalization bound by combining these results with classical Rademacher complexity bounds \cite{bartlett02a}.
\begin{proof}
    We start by stating a version of McDiarmid's inequality \cite{McDiarmid1989} in terms of the empirical Rademacher complexity. This is, there exists a constant $c$ such that
    \[
        \betrag{E_{S_{\hat{t}}}(\theta(\hat{t})) - E(\theta(\hat{t}))} \leq 2 \rademp_{\min\{b_\Omega, b_{\partial \Omega}\}} (E(\mathcal{F}_m^p)) + c \sqrt{\frac{\log \frac{1}{\delta}}{\min\{b_\Omega, b_{\partial \Omega}\}}}.
    \]
    It now remains to bound the empirical Rademacher complexity. Since both $\rademp_{\min\{b_\Omega, b_{\partial \Omega}\}} (\mathcal{F}_m^p)$ and $\rademp_{\min\{b_\Omega, b_{\partial \Omega}\}} (\Delta(\mathcal{F}_m^p))$ are bounded by a constant multiple of $\frac{1}{\sqrt{m}} + \frac{1}{\sqrt{\min\{b_\Omega, b_{\partial \Omega}\}}}$ (the proof for the first part is similar to the second and is done explicitly in \cref{lemma:rademacher complexity nn}), we can bound the empirical Rademacher complexity of $(\Delta F + f)^2$ and $F^2$ in the loss function by using Talagrand's contraction principle \cite{Talagrand91}. To this end, we observe that the Lipschitz constant of $\R\supsetneq \Omega \to \R, x\mapsto x^2$ is given by $2\sup_{x\in \Omega} \betrag{x}$. In total, there holds
    \[
        \rademp_{\min\{b_\Omega, b_{\partial \Omega}\}} (E(\mathcal{F}_m^p)) \precsim \frac{1}{\sqrt{m}} + \frac{1}{\sqrt{\min\{b_\Omega, b_{\partial \Omega}\}}}
    \]
    and thus the result follows.
\end{proof}
We now combine \cref{thm:optimization bound,thm:generalization bounds} to prove \cref{thm:overall-error}.
\begin{proof}[Proof of \Cref{thm:overall-error}]
Throughout this work, we have proven bounds for the optimization error (\cref{thm:optimization bound}) and the generalization error (\cref{thm:generalization bounds}), which enables us to connect the empirical loss function to the exact loss function. It remains now to understand the relationship between the value of the exact loss function at a given $\theta$ and the distance between $u_\theta$ and $u^*$ with respect to a suitable norm. To this end, we make use of \cite[Theorem 8]{Mueller2022} and decompose the error into
\[
    \norm{F(\cdot; \theta(\hat{t})) - u^*}_{H^s(\Omega)} \leq c_{\text{reg}}\sqrt{\eps_{\text{generalization}} + \eps_{\text{optimization}}}
\]
with
\[
    \eps_{\text{generalization}} = \abs{E(\theta(\hat{t})) - E_{S_{\hat{t}}}(\theta(\hat{t})) }, \quad
    \eps_{\text{optimization}} = E_{S_{\hat{t}}}(\theta(\hat{t})),
\]
since $E(\theta(\hat{t})) \leq \eps_{\text{generalization}} + \eps_{\text{optimization}}$ and thus we obtain a complete high-probability error bound.     
\end{proof}

\section*{Acknowledgements}
The authors want to thank Semih \c{C}ayc{\i} for helpful discussions and advice regarding this work. 
JN acknowledges funding by the Deutsche Forschungsgemeinschaft (DFG, German Research Foundation) – 320021702/GRK2326 – Energy, Entropy, and Dissipative Dynamics (EDDy).
JM acknowledges funding by the Deutsche Forschungsgemeinschaft (DFG, German Research Foundation) under the project number 442047500 through the Collaborative Research Center Sparsity and Singular Structures (SFB 1481).

\appendix
\section*{Appendix}

We initialize the weights according to the following distribution. 

\begin{definition}[Truncated Normal Distribution]\label{definition: truncated normal distribution}
			Let $\mathcal{N}(\mu, \sigma^2)$ be a normal distributed random variable with mean $\mu$ and variance $\sigma^2$, density $\varphi$ and cumulative distribution function $\Phi$. Then the corresponding truncated normal distribution $\mathcal{N}_{(a,b)}(\mu, \sigma^2)$ on the interval $(a,b)$ is defined via the probability density function
			\[
			f(x; \mu, \sigma, a, b) \coloneqq \begin{cases}
				\frac{1}{\sigma} \frac{\varphi\kl \frac{x-\mu}{\sigma}\kr }{\Phi\kl \frac{b-\mu}{\sigma}\kr - \Phi\kl \frac{a-\mu}{\sigma}\kr} & \text{for } x\in (a,b) \\
				0 & \text{otherwise}
			\end{cases}.
			\]
			For $a>0$ we set $\mathcal{N}_a(\mu, \sigma^2) \coloneqq \mathcal{N}_{(-a,a)}(\mu, \sigma^2)$. The multivariate truncated normal distribution is obtained by taking the product measure of one-dimensional truncated normal distributions.
\end{definition}

Now, we provide the proofs of the linearization and approximation results. 

\begin{proof}[Proof of \cref{lemma:linearization error}]~\\
    \begin{enumerate}
            \item Using the triangle inequality and the fact that $\Delta F(x;\theta(0)) = 0$ for all $x\in \Omega$, we compute
			\begin{align*}
				&\quad\, \betrag{\sum_{i=1}^m \nabla_{\theta_i}^T \Delta F\kl x; \theta(0) \kr \el \theta_i - \theta_i(0)\er - \Delta F\kl x; \theta(t)\kr} \\
				&= \betrag{\sum_{i=1}^m \nabla_{\theta_i}^T \Delta F\kl x; \theta(0) \kr \el \theta_i - \theta_i(0)\er - \frac{1}{\sqrt{m}} \sum_{i=1}^m \sum_{j=1}^d c_i \theta_{ij}^2 \sigma''\kl \theta_i^T x\kr }\\
				&= \left| \frac{1}{\sqrt{m}} \sum_{i=1}^m c_i \begin{bmatrix}
					\el \sum_{l=1}^d x_1 \theta_{il}^2(0) \sigma'''\kl \theta_i(0)^T x\kr \er + 2\theta_{i1}(0) \sigma''\kl \theta_i(0)^T x\kr \\
					\vdots \\
					\el \sum_{l=1}^d x_d \theta_{il}^2(0) \sigma'''\kl \theta_i(0)^T x\kr \er + 2\theta_{id}(0) \sigma''\kl \theta_i(0)^T x\kr
				\end{bmatrix}^T \el \theta_i - \theta_i(0) \er \right.\\
				&\quad\, \left. - \frac{1}{\sqrt{m}} \sum_{i=1}^m \sum_{j=1}^d c_i \theta_{ij}^2 \sigma''\kl \theta_i^T x\kr \right| \\
				&=\left|\frac{1}{\sqrt{m}} \sum_{i=1}^m c_i \el \sum_{l=1}^d \sum_{j=1}^d x_l \theta_{ij}^2(0) \sigma'''\kl \theta_i(0)^T x\kr \el \theta_{il}(t) - \theta_{il}(0)\er\right. \right. \\
				&\quad\,\left.\left. + \sum_{l=1}^d 2\theta_{il}(0) \sigma''\kl \theta_i(0)^T x\kr \el \theta_{il}(t) - \theta_{il}(0)\er - \sum_{j=1}^d \theta_{il}^2(t) \sigma''\kl \theta_i^T x\kr \er\right| \\
				&=\left|\frac{1}{\sqrt{m}} \sum_{i=1}^m c_i \el \sum_{j=1}^d \bigg[ \theta_{ij}^2 \sigma''\kl \theta_i^T x\kr - 2\theta_{ij} \theta_{ij}(0) \sigma''\kl \theta_i(0)^T x\kr + 2\theta_{ij}^2(0) \sigma''\kl \theta_i(0)^T x\kr\right.\right. \\
				&\quad\,  - \theta_{ij}^2(0) \sigma'''\kl \theta_i(0)^T x\kr \el x^T \kl \theta_i - \theta_i(0)\kr \er \bigg] \Biggr] \Bigg| \\
				&\leq \left|\frac{1}{\sqrt{m}} \sum_{i=1}^m c_i \el \sum_{j=1}^d \bigg[ \kl \theta_{ij} - \theta_{ij}(0)\kr^2 \sigma''\kl \theta_i(0)^T x\kr + \theta_{ij}^2(0) \sigma''\kl \theta_i(0)^T x\kr \right.\right. \\
				&\quad\, +  \theta_{ij}^2 \sigma''\kl \theta_i^T x\kr - \theta_{ij}^2 \sigma''\kl \theta_i(0)^T x\kr - \theta_{ij}^2(0) \sigma'''\kl \theta_i(0)^T x\kr \el x^T \kl \theta_i - \theta_i(0)\kr \er \bigg] \Biggr] \Bigg| \\
				&= \left|\frac{1}{\sqrt{m}} \sum_{i=1}^m c_i \el \sum_{j=1}^d \bigg[ \underbrace{\kl \theta_{ij} - \theta_{ij}(0)\kr^2}_{\precsim \frac{1}{m}} \sigma''\kl \theta_i(0)^T x\kr \right.\right. \\
				&\quad\,  \underbrace{+\theta_{ij}^2 \sigma''\kl \theta_i^T x\kr - \theta_{ij}^2 \sigma''\kl \theta_i(0)^T x\kr - \theta_{ij}^2(0) \sigma'''\kl \theta_i(0)^T x\kr \el x^T \kl \theta_i - \theta_i(0)\kr \er}_{\overset{\mathclap{\text{\cref{lemma:application of triangle inequality}}}}{\precsim}\qquad\  \frac{1}{m}} \bigg] \Biggr] \Bigg| \\
				&\precsim \frac{1}{\sqrt{m}} .
			\end{align*}
            Here we used that for any $\beta$-smooth function $\sigma$ and any $r,s$, there holds
			\[
				\betrag{\sigma(r) - \sigma(s) - \sigma'(s)(r-s)} = \betrag{\int_r^s \kl \sigma'(t) - \sigma'(s)\kr \intd{t}} \leq \frac{\beta (r-s)^2}{2}.
			\]
            We will use this inequality from now on without explicitly repeating the argument.
			\item The proof, as well as the statement, can be found in \cite[Proposition 3.4]{Telgarsky2023}.
   \end{enumerate}
   \end{proof}
   \begin{proof}[Proof of \cref{lemma:approximation error}]
    \begin{enumerate}
			\item First, we observe that
			\[
			\Delta u(x) = \E_{\theta_0\sim \mathcal{N}_a(0,I_d)} \el \sum_{k=1}^d 2v_k(\theta_0) \theta_{0_k} \sigma''(\theta_0^T x) + \sum_{k=1}^d \sum_{j=1}^d v_j (\theta_0) x_j \theta_{0_k}^2 \sigma'''(\theta_0^T x)\er . 
			\]
			This implies
			\begin{align*}
				&\quad\, \sum_{i=1}^m \nabla_{\theta_i}^T \Delta F\kl x; \theta(0) \kr \el \theta_i - \theta_i(0)\er \\
				&=  \frac{1}{\sqrt{m}} \sum_{i=1}^m c_i \el \sum_{k=1}^d \sum_{j=1}^d x_j \theta_{ik}^2(0) \sigma'''\kl \theta_i(0)^T x\kr U_{ij} + \sum_{k=1}^d 2 \theta_{ik}(0) \sigma''\kl \theta_i(0)^T x\kr U_{ik} \er \\
				&= \frac{1}{m} \sum_{i=1}^m \el \sum_{k=1}^d \sum_{j=1}^d v_j\kl \theta_i(0)\kr x_j \theta_{ik}^2(0) \sigma'''\kl \theta_i(0)^T x\kr + \sum_{k=1}^d 2v_k\kl \theta_i(0)\kr \theta_{ik}(0) \sigma''\kl \theta_i(0)^T x\kr \er \\
				&\xrightarrow{m\to\infty} \Delta u(x)\text{ almost surely}.
			\end{align*}
			Thus from Hoeffding's inequality \cite{boucheron2003concentration} and the triangle inequality the claim follows.
            \item We compute
			\begin{align*}
				&\quad\, \sum_{i=1}^m \nabla_{\theta_i}^T F\kl x; \theta(0) \kr \el \theta_i - \theta_i(0)\er \\
				&= \frac{1}{\sqrt{m}}\sum_{i=1}^m c_i x^T \sigma'\kl \theta_i(0)^T x\kr \frac{1}{\sqrt{m}} c_i v\kl \theta_i(0)\kr \\
				&= \frac{1}{m} \sum_{i=1}^m v^T(\theta_i(0))x \sigma'\kl \theta_i(0)^T x\kr
			\end{align*}
			and observe that $v^T(\theta_i(0))x\sigma'(\theta_i(0)^T x)$ is an unbiased Monte Carlo estimator of $u(x)$ (for each $i\in \{1,\ldots, m\}$) and thus the expression in the last line converges to $u(x)$ almost surely for $m\to \infty$. Thus from Hoeffding's inequality and the triangle inequality the claim follows.
		\end{enumerate}
\end{proof}

\begin{restatable}{lem}{technicalinequalities}\label{lemma:technical inequalities}
Let $m\in \N_e$, fix $\theta(0)$ sampled from \cref{eq: ntk initialization} and let $\theta_i \in B_{2,\infty}\kl \theta(0), \frac{p}{\sqrt{m}}\kr$. Then the following statements hold: For all $x\in \partial \Omega$ we have
\begin{align}
        \betrag{\sum_{i=1}^m \nabla_{\theta_i}^T F\kl x; \theta\kr \el \theta_i - \theta_i(0)\er - \sum_{i=1}^m \nabla_{\theta_i}^T F\kl x; \theta(0)\kr \el \theta_i - \theta_i(0)\er} & \precsim \frac{1}{\sqrt{m}}. 
\end{align}
For all $x\in \Omega$ we have
\begin{align}
    \betrag{\sum_{i=1}^m \nabla_{\theta_i}^T \Delta F\kl x; \theta\kr \el \theta_i - \theta_i(0)\er - \sum_{i=1}^m \nabla_{\theta_i}^T \Delta F\kl x; \theta(0)\kr \el \theta_i - \theta_i(0)\er} & \precsim \frac{1}{\sqrt{m}}.
\end{align}
\end{restatable}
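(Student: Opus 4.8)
The plan is to read both inequalities as \emph{feature-drift} estimates: writing $G$ for either $F$ or $\Delta F$, Cauchy--Schwarz applied to each summand together with $\norm{\theta_i - \theta_i(0)}_2 \le p/\sqrt m$ gives
\begin{align*}
    \Bigl| \sum_{i=1}^m \bigl( \nabla_{\theta_i} G(x;\theta) - \nabla_{\theta_i} G(x;\theta(0)) \bigr)^T (\theta_i - \theta_i(0)) \Bigr| \le \frac{p}{\sqrt m} \sum_{i=1}^m \norm{ \nabla_{\theta_i} G(x;\theta) - \nabla_{\theta_i} G(x;\theta(0)) }_2 ,
\end{align*}
so it suffices to show $\norm{ \nabla_{\theta_i} G(x;\theta) - \nabla_{\theta_i} G(x;\theta(0)) }_2 \precsim 1/m$ uniformly in $i\in\{1,\dots,m\}$ and $x\in\overline\Omega$; summing $m$ such terms against $p/\sqrt m$ then yields the claimed $1/\sqrt m$ rate. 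Throughout I use $\norm{x}_2\le c_\Omega$ on $\overline\Omega$, the normalization $\abs{c_i}\le 1$ and formulas from the proof of \cref{thm:optimization bound}, the derivative bounds $\sigma_1,\dots,\sigma_4$ of \cref{ass:smoothness}, and -- crucially for the Laplacian part -- that the truncated-Gaussian initialization of \cref{definition: truncated normal distribution} forces $\norm{\theta_i(0)}_2\le a\sqrt d$. For $G=F$ this is immediate: since $\nabla_{\theta_i}F(x;\theta)=\frac{c_i}{\sqrt m}\sigma'(\theta_i^Tx)\,x$, the difference equals $\frac{c_i}{\sqrt m}\bigl(\sigma'(\theta_i^Tx)-\sigma'(\theta_i(0)^Tx)\bigr)x$, and the Lipschitz bound $\abs{\sigma'(s)-\sigma'(t)}\le\sigma_2\abs{s-t}$ together with $\abs{x^T(\theta_i-\theta_i(0))}\le c_\Omega\norm{\theta_i-\theta_i(0)}_2\le c_\Omega p/\sqrt m$ gives norm at most $\sigma_2 c_\Omega^2 p/m$.

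For $G=\Delta F$ I would start from the identity $\nabla_{\theta_i}\Delta F(x;\theta)=\frac{c_i}{\sqrt m}\bigl(2\theta_i\,\sigma''(\theta_i^Tx)+\norm{\theta_i}_2^2\,\sigma'''(\theta_i^Tx)\,x\bigr)$ from the proof of \cref{thm:optimization bound}, and split the difference of the bracketed vectors at $\theta$ and $\theta(0)$ into the four pieces
\begin{align*}
    2(\theta_i-\theta_i(0))\sigma''(\theta_i^Tx) \;&+\; 2\theta_i(0)\bigl(\sigma''(\theta_i^Tx)-\sigma''(\theta_i(0)^Tx)\bigr) \\
    +\;\bigl(\norm{\theta_i}_2^2-\norm{\theta_i(0)}_2^2\bigr)\sigma'''(\theta_i^Tx)\,x \;&+\; \norm{\theta_i(0)}_2^2\bigl(\sigma'''(\theta_i^Tx)-\sigma'''(\theta_i(0)^Tx)\bigr)x .
\end{align*}
Their $\ell^2$-norms are bounded, respectively, by $2\sigma_2 p/\sqrt m$; by $2\norm{\theta_i(0)}_2\sigma_3 c_\Omega p/\sqrt m\le 2a\sqrt d\,\sigma_3 c_\Omega p/\sqrt m$; by $\sigma_3 c_\Omega\,\abs{\norm{\theta_i}_2^2-\norm{\theta_i(0)}_2^2}\le \sigma_3 c_\Omega\norm{\theta_i-\theta_i(0)}_2\bigl(\norm{\theta_i}_2+\norm{\theta_i(0)}_2\bigr)\le \sigma_3 c_\Omega\frac{p}{\sqrt m}\bigl(2a\sqrt d+\frac{p}{\sqrt m}\bigr)$; and by $a^2 d\,\sigma_4 c_\Omega p/\sqrt m$. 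Each is $\precsim 1/\sqrt m$ with constant polynomial in $a,d,c_\Omega,p,\sigma_2,\sigma_3,\sigma_4$, so multiplying by $\abs{c_i}/\sqrt m\le 1/\sqrt m$ gives $\norm{\nabla_{\theta_i}\Delta F(x;\theta)-\nabla_{\theta_i}\Delta F(x;\theta(0))}_2\precsim 1/m$, and the reduction above closes the estimate.

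I do not expect a genuine obstacle: the lemma is a Lipschitz-in-$\theta$ statement for the NTK features inside a ball of radius $p/\sqrt m$ around the initialization. The only points that need care are the bookkeeping of the four-term split for $\Delta F$ -- separating a ``change of the polynomial prefactor'' ($\theta_i$ respectively $\norm{\theta_i}_2^2$) from a ``change of the activation derivative'' -- and the observation that \emph{every} difference term produced this way carries one extra factor of size $O(1/\sqrt m)$, namely $\norm{\theta_i-\theta_i(0)}_2$ or $\abs{\norm{\theta_i}_2^2-\norm{\theta_i(0)}_2^2}$. Together with the $1/\sqrt m$ coming from $c_i$, this is precisely what turns the $O(1/\sqrt m)$ size of the features themselves into an $O(1/m)$ size for their \emph{drift}, after which summing $m$ of them against $\norm{\theta_i-\theta_i(0)}_2\le p/\sqrt m$ produces the asserted $1/\sqrt m$. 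The compact support of the initialization is exactly what is needed to bound the prefactors $\norm{\theta_i(0)}_2$ and $\norm{\theta_i(0)}_2^2$ in the Laplacian estimate.
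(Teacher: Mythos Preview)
Your proposal is correct and follows essentially the same route as the paper: Cauchy--Schwarz against $\norm{\theta_i-\theta_i(0)}_2\le p/\sqrt m$, followed by Lipschitz estimates on the activation derivatives and the bound $\norm{\theta_i(0)}_2\le a\sqrt d$ from the truncated initialization. Your explicit four-term split for the Laplacian case is exactly what the paper obtains implicitly by invoking its product inequality $|ab-cd|\le|a||b-d|+|d||a-c|$ (\cref{lemma:application of triangle inequality}); if anything, your write-up is more detailed than the paper's, which for that inequality simply cites the lemma and Cauchy--Schwarz without displaying the individual pieces.
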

\begin{proof}
\textbf{Estimates for $\partial \Omega$:}
    We calculate
			\begin{align*}
				&\quad\, \betrag{\sum_{i=1}^m \nabla_{\theta_i}^T F\kl x; \theta(t)\kr \el \theta_i - \theta_i(0)\er - \sum_{i=1}^m \nabla_{\theta_i}^T F\kl x; \theta(0)\kr \el \theta_i - \theta_i(0)\er} \\
				&\leq \frac{1}{m}\sum_{i=1}^m p C_{\Omega} \underbrace{\betrag{\sigma'\kl \theta_i(t)^T x\kr - \sigma'\kl \theta_i(0)^T x\kr}}_{\begin{aligned}
						&\leq \sigma_2\betrag{\el \theta_i(t) - \theta_i(0)\er^T x} \\
						&\leq \sigma_2 C_{\Omega} \norm{\theta_i(t)-\theta_i(0)}_2 \\
						&\leq \frac{\sigma_2 C_{\Omega} p}{\sqrt{m}}
				\end{aligned}} \\
				&\leq \frac{p^2 C_{\Omega}^2 \sigma_2}{\sqrt{m}}.
			\end{align*}
\textbf{Estimates for $\Omega$:}
There holds
			\begin{align*}
			&\quad\, \betrag{\sum_{i=1}^m \nabla_{\theta_i}^T \Delta F\kl x; \theta(t)\kr \el \theta_i - \theta_i(0)\er - \sum_{i=1}^m \nabla_{\theta_i}^T \Delta F\kl x; \theta(0)\kr \el \theta_i - \theta_i(0)\er} \\
			&= \betrag{\sum_{i=1}^m \el \nabla_{\theta_i}^T \Delta F\kl x; \theta(t)\kr - \nabla_{\theta_i}^T \Delta F\kl x;\theta(0)\kr \er \el \theta_i - \theta_i(0)\er} \\
			&= \left|\frac{1}{\sqrt{m}} \sum_{i=1}^m c_i\right. \\
			&\quad\, \times \left.\begin{bmatrix}
			\el \sum_{l=1}^d x_1 \el \theta_{il}^2(t) \sigma'''\kl \theta_i(t)^T x\kr - \theta_{il}^2(0) \sigma'''\kl \theta_i(0)^T x\kr\er \er \\ + \el 2\theta_{i1}(t) \sigma''\kl \theta_i(t)^T x\kr - 2\theta_{i1}(0) \sigma''\kl \theta_i(0)^T x\kr\er \\
			\vdots \\
			\el \sum_{l=1}^d x_d \el \theta_{il}^2(t) \sigma'''\kl \theta_i(t)^T x\kr - \theta_{il}^2(0) \sigma'''\kl \theta_i(0)^T x\kr\er \er \\ + \el 2\theta_{id}(t) \sigma''\kl \theta_i(t)^T x\kr - 2\theta_{id}(0) \sigma''\kl \theta_i(0)^T x\kr\er
			\end{bmatrix}^T \el \theta_i - \theta_i(0)\er\right| \\
			&\overset{\mathclap{\text{\cref{lemma:application of triangle inequality}}}}{\underset{\mathclap{\text{Cauchy-Schwarz ineq.}}}{\precsim}} \qquad \frac{1}{\sqrt{m}}.
			\end{align*}
\end{proof}
\begin{lem}[Rademacher Complexity of Shallow Networks]\label{lemma:rademacher complexity nn}
    We consider a neural network $F(x;\theta,c) = \frac{1}{\sqrt{m}}\sum_{i=1}^m c_i \sigma(\theta_i^T x)$ and let the activation function satisfy  
    \[
        \begin{aligned}
            \abs{\sigma'(z)-\sigma'(\tilde{z})} &\leq \beta \abs{z-\tilde{z}} \quad \text{for all } z, \tilde{z} \in \R \quad\text{and}\\
            \sup_z \abs{\sigma'(z)} &\leq \kappa. 
        \end{aligned}
    \]
    Further let
    \[
        \mathcal{F}_m^p \coloneqq \left\{
            x\mapsto F(x;\theta,c) \mid \theta\in
            B_{2,\infty}\kl \theta(0), \frac{p}{\sqrt{m}}\kr
        \right\}.
    \]
    We assume that $\norm{x_j}_2 \leq 1$ for $j=1,2, \ldots, n$. Then there holds
    \[
        \rademp_n(\mathcal{F}_m^p) \leq \frac{p^2 \beta}{2\sqrt{m}} + \frac{\kappa p}{\sqrt{n}}
    \]
    for any $p>0$ and $m,n\in \N$.
\end{lem}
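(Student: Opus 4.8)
The plan is to split $\mathcal{F}_m^p$ into the linearized class plus a uniformly small remainder, bound the empirical Rademacher complexity of each part separately, and then combine the two via subadditivity of $\rademp_n$ under sums of function classes.

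Since the symmetric initialization \eqref{eq: ntk initialization} gives $F(x;\theta(0),c)=0$, for every $\theta\in B_{2,\infty}(\theta(0),\tfrac{p}{\sqrt m})$ I would write $F(x;\theta,c)=F^{\mathrm{lin}}(x;\theta)+R(x;\theta)$ with
\[
F^{\mathrm{lin}}(x;\theta)=\sum_{i=1}^m\nabla_{\theta_i}^TF(x;\theta(0),c)\,[\theta_i-\theta_i(0)],\qquad R(x;\theta)=F(x;\theta,c)-F^{\mathrm{lin}}(x;\theta).
\]
By the second-order Taylor estimate $|\sigma(r)-\sigma(s)-\sigma'(s)(r-s)|\le\tfrac{\beta}{2}(r-s)^2$ used already in \cref{lemma:linearization error}, applied with $r=\theta_i^Tx_j$ and $s=\theta_i(0)^Tx_j$, together with $|(\theta_i-\theta_i(0))^Tx_j|\le\|\theta_i-\theta_i(0)\|_2\|x_j\|_2\le\tfrac{p}{\sqrt m}$ and $|c_i|\le1$ (the $\pm1$ output weights, after the explicit $\tfrac{1}{\sqrt m}$ is pulled out), one obtains the uniform bound $|R(x_j;\theta)|\le\tfrac{\beta p^2}{2\sqrt m}$ on all sample points. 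Letting $\mathcal F^{\mathrm{lin}}$ and $\mathcal R$ be the function classes $\{x\mapsto F^{\mathrm{lin}}(x;\theta)\}$ and $\{x\mapsto R(x;\theta)\}$ indexed by $\theta\in B_{2,\infty}(\theta(0),\tfrac{p}{\sqrt m})$, we have $\mathcal F_m^p\subseteq\mathcal F^{\mathrm{lin}}+\mathcal R$, hence $\rademp_n(\mathcal F_m^p)\le\rademp_n(\mathcal F^{\mathrm{lin}})+\rademp_n(\mathcal R)$.

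For the remainder class a crude bound suffices: $\sup_{g\in\mathcal R}\sum_{j=1}^n\varepsilon_jg(x_j)\le\sum_{j=1}^n\sup_{g\in\mathcal R}|g(x_j)|\le n\cdot\tfrac{\beta p^2}{2\sqrt m}$, so $\rademp_n(\mathcal R)\le\tfrac{\beta p^2}{2\sqrt m}$. For the linearized class, observe that $F^{\mathrm{lin}}(x;\theta)=\langle\,\theta-\theta(0),\Phi(x)\,\rangle$ is linear in $\theta$, with feature vector $\Phi(x)_i=\nabla_{\theta_i}F(x;\theta(0),c)=\tfrac{c_i}{\sqrt m}x\,\sigma'(\theta_i(0)^Tx)$, and with $\|\theta-\theta(0)\|_F^2=\sum_{i=1}^m\|\theta_i-\theta_i(0)\|_2^2\le p^2$. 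Hence, by Cauchy--Schwarz followed by Jensen's inequality and orthogonality of the Rademacher signs,
\[
n\,\rademp_n(\mathcal F^{\mathrm{lin}})=\E_\varepsilon\sup_{\theta}\Big\langle\theta-\theta(0),\sum_{j=1}^n\varepsilon_j\Phi(x_j)\Big\rangle\le p\,\E_\varepsilon\Big\|\sum_{j=1}^n\varepsilon_j\Phi(x_j)\Big\|_2\le p\Big(\sum_{j=1}^n\|\Phi(x_j)\|_2^2\Big)^{1/2}.
\]
Since $\|\Phi(x_j)\|_2^2=\sum_{i=1}^m\tfrac{c_i^2}{m}\|x_j\|_2^2\,\sigma'(\theta_i(0)^Tx_j)^2\le\kappa^2$, the right-hand side is at most $p\kappa\sqrt n$, i.e. $\rademp_n(\mathcal F^{\mathrm{lin}})\le\tfrac{\kappa p}{\sqrt n}$. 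Adding the two bounds yields the claim.

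I do not expect a genuine obstacle: this is the supervised-learning-style linear Rademacher bound, adapted to the $\|\cdot\|_{2,\infty}$-ball used in the projection step of \cref{alg:pinn optimization}. The only points requiring care are bookkeeping of constants --- keeping the factor $\tfrac12$ from the Taylor remainder, tracking that $|c_i|\le1$ once the $\tfrac1{\sqrt m}$ prefactor is separated, and noting that the per-neuron constraints $\|\theta_i-\theta_i(0)\|_2\le\tfrac{p}{\sqrt m}$ over $m$ blocks collapse to precisely the Frobenius radius $p$ needed in the Cauchy--Schwarz step.
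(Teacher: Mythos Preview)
Your proposal is correct and follows essentially the same approach as the paper's proof: decompose into the linearized class plus a uniformly bounded remainder, then bound the two Rademacher complexities separately. The paper merely sketches this strategy and defers both the subadditivity statement $\rademp_n(\mathcal{F}_m^p)\le\rademp_n(\mathcal{F}_m^{p,\mathrm{lin}})+\eps_{\mathrm{lin}}$ and the linear bound $\rademp_n(\mathcal{F}_m^{p,\mathrm{lin}})\le\frac{\kappa p}{\sqrt n}$ to an external reference, whereas you spell out both computations explicitly; one small remark is that the lemma as stated does not assume the symmetric initialization, but your argument still goes through for arbitrary $\theta(0)$ since shifting every element of a class by the fixed function $F(\cdot;\theta(0),c)$ leaves the empirical Rademacher complexity unchanged.
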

\begin{proof}
    We will only state the proof strategy: 
    We decompose the function class $\mathcal F^p_m$ into its linearization and a remainder and bound the Rademacher complexity of both parts individually. 
    Since we know from \cite{bach2024learning} that $\rademp_n(\mathcal{F}_m^p) \le \rademp_n({\mathcal{F}_m^{p,\textup{lin}}}) + \eps_{\textup{lin}}$, where $\eps_{\textup{lin}}$ denotes the linearization error as in \cref{lemma:linearization error}, we can use \cref{lemma:linearization error} to get the result together with the result $\rademp_n({\mathcal{F}_m^{p,\textup{lin}}}) \le \frac{\kappa p}{\sqrt{n}}$ from \cite{bach2024learning} to get the result.
\end{proof}

\begin{lem}\label{lemma:application of triangle inequality}
Let $a,b,c,d\in \R$.
There holds
\[
\betrag{ab-cd} \leq \betrag{a}\betrag{b-d} + \betrag{d}\betrag{a-c}.
\]
Assume further that $(a-b)^2\leq \eps$ for some $\eps > 0$. Then
\[
    \betrag{a^2-b^2} \leq \betrag{a+b}\sqrt{\eps}.
\]
\end{lem}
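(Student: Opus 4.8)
The plan is to treat the two inequalities separately, each by a single algebraic identity followed by the triangle inequality. For the first bound, I would insert the mixed term $ad$: writing
\[
    ab - cd = ab - ad + ad - cd = a(b-d) + d(a-c),
\]
the triangle inequality together with multiplicativity of the absolute value, $\betrag{xy} = \betrag{x}\betrag{y}$, immediately gives $\betrag{ab-cd} \le \betrag{a}\betrag{b-d} + \betrag{d}\betrag{a-c}$. The only ``choice'' in the argument is which of the two natural mixed terms ($ad$ or $bc$) to add and subtract; inserting $ad$ produces exactly the asserted right-hand side, whereas inserting $bc$ would give the symmetric variant with the roles of the two factors swapped.

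For the second bound, I would use the factorization $a^2 - b^2 = (a+b)(a-b)$, pass to absolute values to get $\betrag{a^2-b^2} = \betrag{a+b}\,\betrag{a-b}$, and then note that $\betrag{a-b} = \sqrt{(a-b)^2} \le \sqrt{\eps}$ by the hypothesis $(a-b)^2 \le \eps$ and monotonicity of the square root. Substituting yields $\betrag{a^2-b^2} \le \betrag{a+b}\sqrt{\eps}$. (Alternatively, the second statement follows from the first by setting $c = b$ and $d = a$, which turns $\betrag{ab-cd} \le \betrag{a}\betrag{b-d} + \betrag{d}\betrag{a-c}$ into $\betrag{ab - ba} = 0$, so that route does not work directly; instead one applies the first inequality with the substitution $a \mapsto a, b\mapsto a, c\mapsto b, d\mapsto b$, giving $\betrag{a^2 - b^2} \le \betrag{a}\betrag{a-b} + \betrag{b}\betrag{a-b} \le \betrag{a+b}\,\betrag{a-b}$, and then bounds $\betrag{a-b}$ as above — so this reduction is also available and perhaps cleaner to state.)

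There is no genuine obstacle here: both claims are elementary and the proof is a two-line computation. The only point requiring a sliver of care is the passage $\betrag{a-b} \le \sqrt{\eps}$, which uses that $\sqrt{\cdot}$ is well defined and monotone on $[0,\infty)$ and that $(a-b)^2 \ge 0$ so the hypothesis is not vacuous. I would present the argument in the order above: first the identity $ab-cd = a(b-d)+d(a-c)$ with the triangle inequality, then the factorization of $a^2-b^2$ with the square-root bound.
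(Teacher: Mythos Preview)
Your main argument is correct and matches the paper's proof exactly: insert $ad$ and apply the triangle inequality for the first bound, then factor $a^2-b^2=(a+b)(a-b)$ and use $\betrag{a-b}\le\sqrt{\eps}$ for the second.

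One small slip in your parenthetical alternative: applying the first inequality with $a\mapsto a$, $b\mapsto a$, $c\mapsto b$, $d\mapsto b$ gives $\betrag{a^2-b^2}\le(\betrag{a}+\betrag{b})\betrag{a-b}$, and in general $\betrag{a}+\betrag{b}\ge\betrag{a+b}$, so the step ``$\le\betrag{a+b}\,\betrag{a-b}$'' goes the wrong way. That route therefore yields a slightly weaker bound than claimed, not the stated one; the direct factorization you gave first is the clean argument.
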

\begin{proof}
We use the triangle inequality to obtain
    \[
    \betrag{ab-cd} = \betrag{ab - ad + ad - cd} \leq \betrag{ab-ad} + \betrag{ad-cd} = \betrag{a} \betrag{b-d} + \betrag{d} \betrag{a-c}.
    \]
Further, we compute
    \[
        \betrag{a^2-b^2} = \betrag{a-b}\betrag{a+b} \leq \sqrt{\eps} \betrag{a+b}.
    \]
\end{proof}

\printbibliography

@Book{Evans2010,
  author     = {Evans, Lawrence C.},
  title      = {Partial differential equations},
  doi        = {10.1090/gsm/019},
  edition    = {Second},
  isbn       = {978-0-8218-4974-3},
  pages      = {xxii+749},
  publisher  = {American Mathematical Society, Providence, RI},
  series     = {Graduate Studies in Mathematics},
  url        = {https://doi.org/10.1090/gsm/019},
  volume     = {19},
  mrclass    = {35-01},
  mrnumber   = {2597943},
  mrreviewer = {Diego\ M.\ Maldonado},
  year       = {2010},
}

@Online{Telgarsky2023,
  author = {Matus Telgarsky},
  date   = {2023},
  title  = {{Deep learning theory (DRAFT)}},
  url    = {https://mjt.cs.illinois.edu/dlt/two.pdf},
}

@InProceedings{Mueller2022,
  title = 	 {Notes on Exact Boundary Values in Residual Minimisation},
  author =       {M\"{u}ller, Johannes and Zeinhofer, Marius},
  booktitle = 	 {Proceedings of Mathematical and Scientific Machine Learning},
  pages = 	 {231--240},
  year = 	 {2022},
  editor = 	 {Dong, Bin and Li, Qianxiao and Wang, Lei and Xu, Zhi-Qin John},
  volume = 	 {190},
  series = 	 {Proceedings of Machine Learning Research},
  publisher =    {PMLR},
  pdf = 	 {https://proceedings.mlr.press/v190/muller22b/muller22b.pdf},
  url = 	 {https://proceedings.mlr.press/v190/muller22b.html},
  abstract = 	 {We analyse the difference in convergence mode using exact versus penalised boundary values for the residual minimisation of PDEs with neural network type ansatz functions, as is commonly done in the context of physics informed neural networks. It is known that using an $L^2$ boundary penalty leads to a loss of regularity of $3/2$ meaning that approximation in $H^2$ yields a posteriori estimates in $H^{1/2}$. These notes demonstrate how this loss of regularity can be circumvented if the functions in the ansatz class satisfy the boundary values exactly. Furthermore, it is shown that in this case, the loss function provides a consistent a posteriori error estimator in $H^2$ norm made by the residual minimisation method. We provide analogue results for linear time dependent problems and discuss the implications of measuring the residual in Sobolev norms.}
}

@incollection{luo2020two,
	series = {Handbook of {Numerical} {Analysis}},
	title = {Chapter 11 - {Two}-layer neural networks for partial differential equations: optimization and generalization theory},
	volume = {25},
	url = {https://www.sciencedirect.com/science/article/pii/S1570865924000073},
	abstract = {The problem of solving partial differential equations (PDEs) can be formulated into a least-squares minimization problem, where neural networks are used to parametrize PDE solutions. A global minimizer corresponds to a neural network that solves the given PDE. In this paper, we show that the gradient descent method can identify a global minimizer of the least-squares optimization for solving second-order linear PDEs with two-layer neural networks under the assumption of overparametrization. We also analyze the generalization error of the least-squares optimization for second-order linear PDEs and two-layer neural networks, when the right-hand-side function of the PDE is in a Barron-type space and the least-squares optimization is regularized with a Barron-type norm, without the overparametrization assumption.},
	booktitle = {Numerical {Analysis} {Meets} {Machine} {Learning}},
	publisher = {Elsevier},
	author = {Luo, Tao and Yang, Haizhao},
	editor = {Mishra, Siddhartha and Townsend, Alex},
	year = {2024},
	doi = {https://doi.org/10.1016/bs.hna.2024.05.007},
	note = {ISSN: 1570-8659},
	keywords = {Deep learning, Generalization error, Optimization convergence, Overparametrization, Partial differential equations},
	pages = {515--554},
}

@Misc{Du2019,
  author        = {Simon S. Du and Xiyu Zhai and Barnabas Poczos and Aarti Singh},
  title         = {{Gradient Descent Provably Optimizes Over-parameterized Neural Networks}},
  eprint        = {1810.02054},
  archiveprefix = {arXiv},
  primaryclass  = {cs.LG},
  year          = {2019},
}

@inproceedings{
    Telgarsky2020,
    title={{Polylogarithmic width suffices for gradient descent to achieve arbitrarily small test error with shallow ReLU networks}},
    author={Ziwei Ji and Matus Telgarsky},
    booktitle={International Conference on Learning Representations},
    year={2020},
    url={https://openreview.net/forum?id=HygegyrYwH}
}

@inproceedings{
    ji2019neural,
    title={Neural tangent kernels, transportation mappings, and universal approximation},
    author={Ziwei Ji and Matus Telgarsky and Ruicheng Xian},
    booktitle={International Conference on Learning Representations},
    year={2020},
    url={https://openreview.net/forum?id=HklQYxBKwS}
}

@Misc{Zeinhofer2024,
  author        = {Marius Zeinhofer and Rami Masri and Kent-André Mardal},
  title         = {A Unified Framework for the Error Analysis of Physics-Informed Neural Networks},
  eprint        = {2311.00529},
  archiveprefix = {arXiv},
  primaryclass  = {math.NA},
  year          = {2024},
}

@Misc{Chizat2018,
  author        = {Lenaic Chizat and Francis Bach},
  title         = {On the Global Convergence of Gradient Descent for Over-parameterized Models using Optimal Transport},
  eprint        = {1805.09545},
  archiveprefix = {arXiv},
  primaryclass  = {math.OC},
  year          = {2018},
}

@article{xu2024convergence,
  title={Convergence Analysis of Natural Gradient Descent for Over-parameterized Physics-Informed Neural Networks},
  author={Xu, Xianliang and Du, Ting and Kong, Wang and Li, Ye and Huang, Zhongyi},
  journal={arXiv preprint arXiv:2408.00573},
  year={2024}
}

@article{xu2024convergenceNG,
  title={Convergence of Implicit Gradient Descent for Training Two-Layer Physics-Informed Neural Networks},
  author={Xu, Xianliang and Huang, Zhongyi and Li, Ye},
  journal={arXiv preprint arXiv:2407.02827},
  year={2024}
}

@inproceedings{gao2023gradient,
  title={Gradient descent finds the global optima of two-layer physics-informed neural networks},
  author={Gao, Yihang and Gu, Yiqi and Ng, Michael},
  booktitle={International Conference on Machine Learning},
  pages={10676--10707},
  year={2023},
  organization={PMLR}
}

@inproceedings{
    bonfanti2024challenges,
    title={{The Challenges of the Nonlinear Regime for Physics-Informed Neural Networks}},
    author={Andrea Bonfanti and Giuseppe Bruno and Cristina Cipriani},
    booktitle={The Thirty-eighth Annual Conference on Neural Information Processing Systems},
    year={2024},
    url={https://openreview.net/forum?id=FY6vPtITtE}
}

@InProceedings{rathore2024challenges,
  title = 	 {Challenges in Training {PINN}s: A Loss Landscape Perspective},
  author =       {Rathore, Pratik and Lei, Weimu and Frangella, Zachary and Lu, Lu and Udell, Madeleine},
  booktitle = 	 {Proceedings of the 41st International Conference on Machine Learning},
  pages = 	 {42159--42191},
  year = 	 {2024},
  editor = 	 {Salakhutdinov, Ruslan and Kolter, Zico and Heller, Katherine and Weller, Adrian and Oliver, Nuria and Scarlett, Jonathan and Berkenkamp, Felix},
  volume = 	 {235},
  series = 	 {Proceedings of Machine Learning Research},
  publisher =    {PMLR},
  pdf = 	 {https://raw.githubusercontent.com/mlresearch/v235/main/assets/rathore24a/rathore24a.pdf},
  url = 	 {https://proceedings.mlr.press/v235/rathore24a.html},
  abstract = 	 {This paper explores challenges in training Physics-Informed Neural Networks (PINNs), emphasizing the role of the loss landscape in the training process. We examine difficulties in minimizing the PINN loss function, particularly due to ill-conditioning caused by differential operators in the residual term. We compare gradient-based optimizers Adam, L-BFGS, and their combination Adam+L-BFGS, showing the superiority of Adam+L-BFGS, and introduce a novel second-order optimizer, NysNewton-CG (NNCG), which significantly improves PINN performance. Theoretically, our work elucidates the connection between ill-conditioned differential operators and ill-conditioning in the PINN loss and shows the benefits of combining first- and second-order optimization methods. Our work presents valuable insights and more powerful optimization strategies for training PINNs, which could improve the utility of PINNs for solving difficult partial differential equations.}
}

@misc{zeng2024featuremappingphysicsinformedneural,
      title={Feature Mapping in Physics-Informed Neural Networks (PINNs)}, 
      author={Chengxi Zeng and Tilo Burghardt and Alberto M Gambaruto},
      year={2024},
      eprint={2402.06955},
      archivePrefix={arXiv},
      primaryClass={cs.LG},
      url={https://arxiv.org/abs/2402.06955}, 
}

@inproceedings{
lau2024pinnacle,
title={{PINNACLE}: {PINN} Adaptive ColLocation and Experimental points selection},
author={Gregory Kang Ruey Lau and Apivich Hemachandra and See-Kiong Ng and Bryan Kian Hsiang Low},
booktitle={The Twelfth International Conference on Learning Representations},
year={2024},
url={https://openreview.net/forum?id=GzNaCp6Vcg}
}

@article{mishra2023estimates,
  title={Estimates on the generalization error of physics-informed neural networks for approximating PDEs},
  author={Mishra, Siddhartha and Molinaro, Roberto},
  journal={IMA Journal of Numerical Analysis},
  volume={43},
  number={1},
  pages={1--43},
  year={2023},
  publisher={Oxford University Press}
}

@article{sirignano2018dgm,
  title={DGM: A deep learning algorithm for solving partial differential equations},
  author={Sirignano, Justin and Spiliopoulos, Konstantinos},
  journal={Journal of computational physics},
  volume={375},
  pages={1339--1364},
  year={2018},
  publisher={Elsevier}
}

@article{raissi2019physics,
  title={Physics-informed neural networks: A deep learning framework for solving forward and inverse problems involving nonlinear partial differential equations},
  author={Raissi, Maziar and Perdikaris, Paris and Karniadakis, George E},
  journal={Journal of Computational physics},
  volume={378},
  pages={686--707},
  year={2019},
  publisher={Elsevier}
}

@book{grisvard2011elliptic,
  title={Elliptic problems in nonsmooth domains},
  author={Grisvard, Pierre},
  year={2011},
  publisher={SIAM}
}

@inproceedings{rahimi2008uniform,
  title={Uniform approximation of functions with random bases},
  author={Rahimi, Ali and Recht, Benjamin},
  booktitle={2008 46th annual allerton conference on communication, control, and computing},
  pages={555--561},
  year={2008},
  organization={IEEE}
}

@article{jacot2018neural,
  title={Neural tangent kernel: Convergence and generalization in neural networks},
  author={Jacot, Arthur and Gabriel, Franck and Hongler, Cl{\'e}ment},
  journal={Advances in neural information processing systems},
  volume={31},
  year={2018}
}

@incollection{boucheron2003concentration,
  title={Concentration inequalities},
  author={Boucheron, St{\'e}phane and Lugosi, G{\'a}bor and Bousquet, Olivier},
  booktitle={Summer school on machine learning},
  pages={208--240},
  year={2003},
  publisher={Springer}
}

@inproceedings{bartlett02a,
author = {Bartlett, Peter and Mendelson, Shahar},
year = {2001},
pages = {224-240},
title = {Rademacher and Gaussian Complexities: Risk Bounds and Structural Results},
volume = {3},
isbn = {978-3-540-42343-0},
journal = {The Journal of Machine Learning Research},
doi = {10.1007/3-540-44581-1_15}
}

@inbook{McDiarmid1989, place={Cambridge}, series={London Mathematical Society Lecture Note Series}, title={On the method of bounded differences}, booktitle={Surveys in Combinatorics, 1989: Invited Papers at the Twelfth British Combinatorial Conference}, publisher={Cambridge University Press}, author={McDiarmid, Colin}, editor={Siemons, J.Editor}, year={1989}, pages={148–188}, collection={London Mathematical Society Lecture Note Series}}

@inproceedings{Talagrand91,
  title={Probability in Banach Spaces: Isoperimetry and Processes},
  author={Michel Ledoux and Michel Talagrand},
  year={1991},
  url={https://api.semanticscholar.org/CorpusID:118526268}
}

@article{cuomo2022scientific,
  title={Scientific machine learning through physics--informed neural networks: Where we are and what’s next},
  author={Cuomo, Salvatore and Di Cola, Vincenzo Schiano and Giampaolo, Fabio and Rozza, Gianluigi and Raissi, Maziar and Piccialli, Francesco},
  journal={Journal of Scientific Computing},
  volume={92},
  number={3},
  pages={88},
  year={2022},
  publisher={Springer}
}

@ARTICLE{Cayci2023,
  author={Cayci, Semih and Satpathi, Siddhartha and He, Niao and Srikant, R.},
  journal={IEEE Transactions on Automatic Control}, 
  title={Sample Complexity and Overparameterization Bounds for Temporal-Difference Learning With Neural Network Approximation}, 
  year={2023},
  volume={68},
  number={5},
  pages={2891-2905},
  keywords={Neural networks;Approximation algorithms;Markov processes;Convergence;Complexity theory;Reinforcement learning;Kernel;Neural networks;reinforcement learning (RL);stochastic approximation;temporal-difference (TD) learning},
  doi={10.1109/TAC.2023.3234234}}

@article{rahimi2007random,
  title={Random features for large-scale kernel machines},
  author={Rahimi, Ali and Recht, Benjamin},
  journal={Advances in neural information processing systems},
  volume={20},
  year={2007}
}

@inproceedings{du2019gradient,
  title={Gradient descent finds global minima of deep neural networks},
  author={Du, Simon and Lee, Jason and Li, Haochuan and Wang, Liwei and Zhai, Xiyu},
  booktitle={International conference on machine learning},
  pages={1675--1685},
  year={2019},
  organization={PMLR}
}

@inproceedings{arora2019fine,
  title={Fine-grained analysis of optimization and generalization for overparameterized two-layer neural networks},
  author={Arora, Sanjeev and Du, Simon and Hu, Wei and Li, Zhiyuan and Wang, Ruosong},
  booktitle={International Conference on Machine Learning},
  pages={322--332},
  year={2019},
  organization={PMLR}
}

@book{bowman2023spectral,
  title={On the Spectral Bias of Neural Networks in the Neural Tangent Kernel Regime},
  author={Bowman, Benjamin},
  year={2023},
  publisher={University of California, Los Angeles}
}

@book{bach2024learning,
  title={Learning theory from first principles},
  author={Bach, Francis},
  year={2024},
  publisher={MIT press}
}

@article{gonon2024overview,
  title={An Overview on Machine Learning Methods for Partial Differential Equations: from Physics Informed Neural Networks to Deep Operator Learning},
  author={Gonon, Lukas and Jentzen, Arnulf and Kuckuck, Benno and Liang, Siyu and Riekert, Adrian and von Wurstemberger, Philippe},
  journal={arXiv preprint arXiv:2408.13222},
  year={2024}
}

@misc{toscano2024pinnspikansrecentadvances,
      title={From PINNs to PIKANs: Recent Advances in Physics-Informed Machine Learning}, 
      author={Juan Diego Toscano and Vivek Oommen and Alan John Varghese and Zongren Zou and Nazanin Ahmadi Daryakenari and Chenxi Wu and George Em Karniadakis},
      year={2024},
      eprint={2410.13228},
      archivePrefix={arXiv},
      primaryClass={cs.LG},
      url={https://arxiv.org/abs/2410.13228}, 
}

@article{weinan2021algorithms,
  title={Algorithms for solving high dimensional PDEs: from nonlinear Monte Carlo to machine learning},
  author={E, Weinan and Han, Jiequn and Jentzen, Arnulf},
  journal={Nonlinearity},
  volume={35},
  number={1},
  pages={278},
  year={2021},
  publisher={IOP Publishing}
}

@article{blechschmidt2021three,
  title={Three ways to solve partial differential equations with neural networks—A review},
  author={Blechschmidt, Jan and Ernst, Oliver G},
  journal={GAMM-Mitteilungen},
  volume={44},
  number={2},
  pages={e202100006},
  year={2021},
  publisher={Wiley Online Library}
}

@article{de2024error,
  title={{Error estimates for physics-informed neural networks approximating the Navier--Stokes equations}},
  author={De Ryck, Tim and Jagtap, Ameya D and Mishra, Siddhartha},
  journal={IMA Journal of Numerical Analysis},
  volume={44},
  number={1},
  pages={83--119},
  year={2024},
  publisher={Oxford University Press}
}

@article{de2022error,
  title={{Error analysis for physics-informed neural networks (PINNs) approximating Kolmogorov PDEs}},
  author={De Ryck, Tim and Mishra, Siddhartha},
  journal={Advances in Computational Mathematics},
  volume={48},
  number={6},
  pages={79},
  year={2022},
  publisher={Springer}
}

@article{alfano2024novel,
  title={A novel framework for policy mirror descent with general parameterization and linear convergence},
  author={Alfano, Carlo and Yuan, Rui and Rebeschini, Patrick},
  journal={Advances in Neural Information Processing Systems},
  volume={36},
  year={2024}
}

@article{bochev1998finite,
  title={Finite element methods of least-squares type},
  author={Bochev, Pavel B and Gunzburger, Max D},
  journal={SIAM review},
  volume={40},
  number={4},
  pages={789--837},
  year={1998},
  publisher={SIAM}
}

@article{jiao2024drm,
  title={{DRM Revisited: A Complete Error Analysis}},
  author={Jiao, Yuling and Li, Ruoxuan and Wu, Peiying and Yang, Jerry Zhijian and Zhang, Pingwen},
  journal={arXiv preprint arXiv:2407.09032},
  year={2024}
}

@article{cai2021physics,
  title={{Physics-informed neural networks (PINNs) for fluid mechanics: A review}},
  author={Cai, Shengze and Mao, Zhiping and Wang, Zhicheng and Yin, Minglang and Karniadakis, George Em},
  journal={Acta Mechanica Sinica},
  volume={37},
  number={12},
  pages={1727--1738},
  year={2021},
  publisher={Springer}
}

@article{karniadakis2021physics,
  title={Physics-informed machine learning},
  author={Karniadakis, George Em and Kevrekidis, Ioannis G and Lu, Lu and Perdikaris, Paris and Wang, Sifan and Yang, Liu},
  journal={Nature Reviews Physics},
  volume={3},
  number={6},
  pages={422--440},
  year={2021},
  publisher={Nature Publishing Group}
}

@Article{CiCP-34-4,
    author = {Wang, Honghui and Lu, Lu and Song, Shiji and Huang, Gao},
    title = {{Learning Specialized Activation Functions for Physics-Informed Neural Networks}},
    journal = {Communications in Computational Physics},
    year = {2023},
    volume = {34},
    number = {4},
    pages = {869--906},
    abstract = {<p style="text-align: justify;">Physics-informed neural networks (PINNs) are known to suffer from optimization difficulty. In this work, we reveal the connection between the optimization
    difficulty of PINNs and activation functions. Specifically, we show that PINNs exhibit
    high sensitivity to activation functions when solving PDEs with distinct properties.
    Existing works usually choose activation functions by inefficient trial-and-error. To
    avoid the inefficient manual selection and to alleviate the optimization difficulty of
    PINNs, we introduce adaptive activation functions to search for the optimal function
    when solving different problems. We compare different adaptive activation functions
    and discuss their limitations in the context of PINNs. Furthermore, we propose to tailor the idea of learning combinations of candidate activation functions to the PINNs
    optimization, which has a higher requirement for the smoothness and diversity on
    learned functions. This is achieved by removing activation functions which cannot
    provide higher-order derivatives from the candidate set and incorporating elementary
    functions with different properties according to our prior knowledge about the PDE at
    hand. We further enhance the search space with adaptive slopes. The proposed adaptive activation function can be used to solve different PDE systems in an interpretable
    way. Its effectiveness is demonstrated on a series of benchmarks. Code is available at
    https://github.com/LeapLabTHU/AdaAFforPINNs.</p>},
    issn = {1991-7120},
    doi = {https://doi.org/10.4208/cicp.OA-2023-0058},
    url = {https://global-sci.com/article/79384/learning-specialized-activation-functions-for-physics-informed-neural-networks}
}

@article{ABBASI2024128352,
    title = {{Physical activation functions (PAFs): An approach for more efficient induction of physics into physics-informed neural networks (PINNs)}},
    journal = {Neurocomputing},
    volume = {608},
    pages = {128352},
    year = {2024},
    issn = {0925-2312},
    doi = {https://doi.org/10.1016/j.neucom.2024.128352},
    url = {https://www.sciencedirect.com/science/article/pii/S0925231224011238},
    author = {Jassem Abbasi and Pål Østebø Andersen},
    keywords = {Deep Learning, Physics-informed Neural Networks, PDEs, Activation Functions}
}

@article{wang2023expert,
  title={An expert's guide to training physics-informed neural networks},
  author={Wang, Sifan and Sankaran, Shyam and Wang, Hanwen and Perdikaris, Paris},
  journal={arXiv preprint arXiv:2308.08468},
  year={2023}
}

@InProceedings{Maczuga23,
  author    = {Maczuga, Pawe{\l} and Paszy{\'{n}}ski, Maciej},
  booktitle = {Computational Science -- ICCS 2023},
  title     = {{Influence of Activation Functions on the Convergence of Physics-Informed Neural Networks for 1D Wave Equation}},
  year      = {2023},
  address   = {Cham},
  editor    = {Miky{\v{s}}ka, Ji{\v{r}}{\'i} and de Mulatier, Cl{\'e}lia and Paszynski, Maciej and Krzhizhanovskaya, Valeria V. and Dongarra, Jack J. and Sloot, Peter M.A.},
  pages     = {74--88},
  publisher = {Springer Nature Switzerland},
  abstract  = {In this paper, we consider a model wave equation. We perform a sequence of numerical experiments with Physics Informed Neural Network, considering different activation functions, and different ways of enforcing the initial and boundary conditions. We show the convergence of the method and the resulting numerical accuracy for different setups. We show that, indeed, the PINN methodology can solve the problem efficiently and accurately the wave-equations without actually solving a system of linear equations as it happens in traditional numerical methods like, e.g., finite element or finite difference method. In particular, we compare the influence of selected activation functions on the convergence of the PINN method. Our PINN code is available on github: https://github.com/pmaczuga/pinn-comparison/tree/iccs.},
  isbn      = {978-3-031-35995-8},
}

\end{document}